\documentclass[sn-mathphys]{sn-jnl}% Math and Physical Sciences Reference Style

\jyear{2021}%

%% as per the requirement new theorem styles can be included as shown below
\theoremstyle{thmstyleone}%
\newtheorem{theorem}{Theorem}%  meant for continuous numbers
%%\newtheorem{theorem}{Theorem}[section]% meant for sectionwise numbers
%% optional argument [theorem] produces theorem numbering sequence instead of independent numbers for Proposition

\newtheorem{corollary}[theorem]{Corollary}%
\newtheorem{lemma}[theorem]{Lemma}

\newcommand{\Rm}{\mathbb{R}^m}
\newcommand{\Rpq}{\mathbb{R}^{p+q}}

\newcommand{\Real}{\mathbb{R}}

\newcommand{\Sq}{\mathbb{S}^{q-1}}

\newcommand{\Clm}{\mathcal{C}l_m}
\newcommand{\Clpq}{\mathcal{C}l_{p+q}}

\newcommand{\Sm}{\mathbb{S}^{m-1}}
\newcommand{\Ex}{\mathbb{E}_{\bx}}
\newcommand{\E}{\mathbb{E}}
\newcommand{\Exp}{\mathbb{E}_{\bx_p}}
\newcommand{\Exq}{\mathbb{E}_{\bx_q}}
\newcommand{\Dxp}{D_{\bx_p}}

\newcommand{\Ey}{\mathbb{E}_{\by}}
\newcommand{\Eyp}{\mathbb{E}_{\by_p}}
\newcommand{\Eyq}{\mathbb{E}_{\by_q}}
\newcommand{\Dyp}{D_{\by_p}}

\newcommand{\lv}{\lvert}
\newcommand{\rv}{\rvert}

\newcommand{\be}{\begin{eqnarray*}}
\newcommand{\ee}{\end{eqnarray*}}
\newcommand{\ba}{\begin{align*}}
\newcommand{\bpm}{\begin{pmatrix}}
\newcommand{\epm}{\end{pmatrix}}
\newcommand{\bs}{\boldsymbol}
\newcommand{\bx}{\boldsymbol{x}}
\newcommand{\bxp}{\boldsymbol{x}_{p}}
\newcommand{\bxq}{\boldsymbol{x}_q}
\newcommand{\by}{\boldsymbol{y}}
\newcommand{\byp}{\boldsymbol{y}_{p}}
\newcommand{\byq}{\boldsymbol{y}_q}

\theoremstyle{thmstyletwo}%
\newtheorem{remark}{Remark}%

\theoremstyle{thmstylethree}%
\newtheorem{definition}{Definition}%

\raggedbottom
%%\unnumbered% uncomment this for unnumbered level heads
\begin{document}

\title[Invariance of iterated global differential operator  for slice monogenic functions]{Invariance of iterated global differential operator for slice monogenic functions}

\author*[1]{\fnm{Chao} \sur{Ding}}\email{cding@ahu.edu.cn}

\author[2]{\fnm{Zhenghua} \sur{Xu}}\email{zhxu@hfut.edu.cn}

\affil*[1]{\orgdiv{Center for Pure Mathematics, School of Mathematical Sciences}, \orgname{Anhui University}, \orgaddress{ \city{Hefei}, \postcode{230061}, \state{Anhui}, \country{China}}}

\affil[2]{\orgdiv{School of Mathematics}, \orgname{Hefei University of Technology}, \orgaddress{ \city{Hefei}, \postcode{230061}, \state{Anhui}, \country{China}}}

\abstract{In this article, we present the symmetry group of a global slice Dirac operator and its iterated ones. Further, the explicit forms of intertwining operators of the iterated global slice Dirac operator are given. At the end, we introduce a variant of the global slice Dirac operator, which allows functions considered to be defined on the whole Euclidean space. The invariance property and the intertwining  operators of this variant of the global slice Dirac operator are also presented.}

\keywords{global slice Dirac operator, symmetry group, intertwining operator, iterated global slice Dirac operator}

%%\pacs[JEL Classification]{D8, H51}

\pacs[MSC Classification]{30G35}

\maketitle

\section{Introduction}\label{sec1}

Classical Clifford analysis is a generalization of complex analysis used to study generalized Cauchy-Riemann equations (named as Dirac equations) over Euclidean spaces. The functions annihilated by the Dirac operator are called monogenic functions. These functions have many important properties just as holomorphic functions do in complex analysis. For instance, Cauchy's  theorem, Cauchy integral formula, the mean value property, Liouville's theorem, Maximal principle, etc. More details on classical Clifford analysis can be found in \cite{Brackx,Del}.
\par
On the other hand, there is a significant difference between complex analysis and classical Clifford analysis: it is well-known that the polynomials given in terms of the complex variable   are holomorphic in complex analysis; meanwhile, powers of the paravector-valued variable  are not monogenic in classical Clifford analysis. This situation was changed when the theory of slice analysis over quaternions was introduced  by Gentili and Struppa in 2006 \cite{Gen2,Gen3}, which was motivated by an earlier work done by Cullen in  \cite{Cullen}. Later, Colombo, Sabadini and Struppa \cite{Co1} generalized this idea to the general Clifford algebras and introduced the concept of slice monogenic functions in Euclidean space in 2009. Gentili and Struppa \cite{Gen1} investigated slice regularity for octonions in 2010, and in the next year, the theory of slice regular functions was established on real alternative algebras by Ghiloni and Perotti \cite{Ghi1}. Further investigations on slice regularity of slice Dirac regular functions  have been conducted by Ghiloni \cite{GhiJGA}, Jin, Ren and Sabadini \cite{RenJin}. Recently, a mean value formula for slice regular functions was introduced by Bisi and Winkelmann in \cite{Bisi}. This formula leads to an important result, which says that a slice regular function over quaternions is also harmonic in certain sense.
\par
The symmetry group of slice monogenic functions was first investigated  by Colombo, Krau\ss har and Sabadini \cite{Co} in 2020. The authors described the group under which slice monogenic functions are taken into slice monogenic functions. Further, the authors proved a transformation formula for composing slice monogenic functions with M\"obius transformations and described their conformal invariance. Recently, the conformal invariance has been investigated for generalized partial-slice monogenic functions in \cite{XS}. However, the results in \cite{Co,XS} are limited in that the domains of functions considered are slice and symmetric. In this article, we continue their work by considering the symmetry group of the iterated global slice Dirac operator, which acts on functions defined on any domain. Intertwining operators for the iterated global slice Dirac operator are also provided here. In particular, the transformation formula mentioned above \cite{Co} is a special case here. Note that the global slice Dirac operator involves a norm term in the denominator, which leads to some restrictions on the domain of the functions considered. Hence, at the end, we introduce a variant of the global slice Dirac operator which is well-defined on the whole Euclidean space. This operator also has the same symmetry group as the global slice Dirac operator, but the invariance property does not hold anymore for iterated ones.
\par
This article is organized as follows. Some definitions and notation for classical Clifford analysis and slice Clifford analysis are introduced in Section 2. The intertwining operators for slice Dirac operators are introduced in Section 3. Section 4 is devoted to the intertwining operators for iterated slice Dirac operators. A variant of slice Dirac operator and its invariance property are studied in Section 5.
%%%%%%%%%%%%%%%%%%%%%%%%%
\section{Preliminaries}
In this section, we introduce some definitions and notation for classical Clifford analysis and slice Clifford analysis.
\subsection{Classical Clifford analysis}
Let $\{e_1,\ldots, e_m\}$ be a standard orthonormal basis for the $m$-dimensional real Euclidean space  $\mathbb{R}^m$. A real Clifford algebra, $\mathcal{C}l_m,$ can be generated from $\mathbb{R}^m$ by considering the
relationship $$e_i e_j + e_j e_i= -2\delta_{ij},$$ where $\delta_{ij}$ is the Kronecker delta function. Hence, an arbitrary element of the basis of the Clifford algebra can be written as $e_A=e_{j_1}\cdots e_{j_r},$ where $A=\{j_1, \cdots, j_r\}\subset \{1, 2, \cdots, m\}$ and $1\leq j_1< j_2 < \cdots < j_r \leq m.$ Further, for any element $a\in \mathcal{C}l_m$, we have $a=\sum_Aa_Ae_A,$ where $a_A\in \mathbb{R}$. The linear subspace of $\Clm$ generated by the $\begin{pmatrix} n\\k\end{pmatrix}$ elements of the form $e_A=e_{i_1}\cdots e_{i_k},\ i_l\in\{1,2,\cdots,m\},\ i_1<i_2<\cdots<i_k$, will be denoted by $\Clm^{k}$. The elements in $\Clm^k$ are called $k$-vectors.
\par
For $a=\sum_Aa_Ae_A\in\Clm$, we define the reversion of $a$ as
$
\tilde{a}=\sum_{A}a_A\widetilde{e_A},
$
where $\widetilde{e_{j_1}\cdots e_{j_r}}=e_{j_r}\cdots e_{j_1}$. Also $\widetilde{ab}=\tilde{b}\tilde{a}$ for $a, b\in\Clm.$ We also need the Clifford conjugation defined by $\overline{a}:=\sum_Aa_A\overline{e_A}$, where $\overline{e_{j_1}\ldots e_{j_r}}=\overline{e_{j_r}}\ldots\overline{e_{j_1}},\ \overline{e_j}=-e_j,1\leq j\leq m,\ \overline{e_0}=e_0=1$.
\par
The Dirac operator in $\mathbb{R}^m$ is defined to be $$D_{\bx}:=\sum_{i=1}^{m}e_i\partial_{x_i}.$$  Note $D_{\bx}^2=-\Delta_{\bx}$, where $\Delta_{\bx}$ is the Laplacian in $\mathbb{R}^m$.  A $\Clm$-valued function $f(\bx)$ defined on a domain $U$ in $\Rm$ is left monogenic if $D_{\bx}f(\bx)=0.$ Since Clifford multiplication is not commutative in general, there is a similar definition for right monogenic functions.
\subsection{Slice Clifford analysis}
In this subsection, we firstly introduce some definitions in slice Clifford analysis given in \cite{Co3}, and then we introduce the global slice Dirac operator studied in this article.
\par
Recall that elements in $\Clm^0\oplus\Clm^1$ are called paravectors, which can be identified as vectors in $\Real^{m+1}$. It is easy to see that each paravector can be written as $\bx=x_0+r\bs{\omega}$, where $r=\lv\bx\rv$ and $\bs{\omega}\in\Sm:=\{\bs{\omega}\in \Clm^1: \lv\bs{\omega}\rv=1\}$. This observation leads to the definition of slice monogenic functions on each slice in $\Real^{m+1}$ as follows.
\begin{definition}
Let $\Omega$ be a domain in $\Real^{m+1}$. A function $f:\ \Omega\longrightarrow \Clm$ is called \emph{left slice monogenic} if its restriction $f_{\bs{\omega}}$ to $\Omega_{\bs{\omega}}=\Omega\cap(\Real\oplus\bs{\omega}\Real)$ is holomorphic for all $\bs{\omega}\in\Sm$, i.e., it has continuous partial derivatives and satisfies $(\partial_{x_0}+\bs{\omega}\partial_r)f_{\bs{\omega}}(x_0+r\bs{\omega})=0$.
\end{definition}
\begin{remark}
Roughly speaking, a function $f$ defined on a domain $\Omega\subset\Real^{m+1}$ is called left slice monogenic if and only if it is holomorphic on each slice of $\Omega$, which is the intersection of $\Omega$ and the plane spanned by $\{1,\bs{\omega}\},\ \bs{\omega}\in\Sm$.
\end{remark}
On one hand, the definition of slice monogenic functions was given by the holomorphicity of the restriction of functions on each slice, on the other hand, in \cite{CoG}, the authors introduced a nonconstant coefficients differential operator (also called the generalized slice Cauchy-Riemman operator) given by
\begin{align*}
G=\lv\underline{\bx}\rv^2\partial_{x_0}+\underline{\bx}\sum_{j=1}^mx_j\partial_{x_j},
\end{align*}
where $\bx=x_0+\underline{\bx}\in\Real^{m+1}$. The null solutions of $G$ are exactly the slice monogenic functions when the domain of the functions considered does not intersect the real line. If we consider $\Real^{m+1}=\Real\oplus\Real^m$, then the first term $\partial_{x_0}$ corresponds to $\Real$ and the second term corresponds to $\Real^m$. In this article, we generalize this generalized slice Cauchy-Riemann operator to the case of the decomposition $\Real^{p+q}=\Real^p\oplus\Real^q$.
\par
Let $U\subset\Real^{p+q}$ be a domain and let $f:\ U\longrightarrow \Clpq$ be a real differential function. In this paper, we consider  functions $f:\Real^{p+q}\longrightarrow \Clpq$. Given $\bx\in\Real^{p+q}$, we rewrite it as $$\bx=\sum_{i=1}^{p+q}e_ix_i=\sum_{i=1}^{p}e_ix_i+\sum_{i=p+1}^{p+q}e_ix_i=:\bx_p+\bx_q\in\Real^p\oplus\Real^q.$$ Similarly, we denote the Dirac operator and the Euler operator as follows.
\be
&&D_{\bx}=\sum_{i=1}^{p+q}e_i\partial_{x_i}=\sum_{i=1}^{p}e_i\partial_{x_i}+\sum_{i=p+1}^{p+q}e_i\partial_{x_i}=:D_{\bx_p}+D_{\bx_q},\\
&&\E_{\bx}=\sum_{i=1}^{p+q}x_i\partial_{x_i}=\sum_{i=1}^{p}x_i\partial_{x_i}+\sum_{i=p+1}^{p+q}x_i\partial_{x_i}=:\E_{\bx_p}+\E_{\bx_q}.
\ee
The (global) slice Dirac operator considered here is defined as
\be
G_{\bx}=\Dxp+\frac{\bx_q}{\lv\bx_q\rv^2}\Exq=\sum_{i=1}^pe_i\partial_{x_i}+\frac{\bx_q}{\lv\bx_q\rv^2}\sum_{i=p+1}^{p+q}x_i\partial_{x_i}.
\ee
Note that when $p=1$, the differential operator $G_{\bx}$ is not exactly the differential operator $G$ given above. But, the two operators are closely connected and more details on the connections can be found at the end of Section 3 below.
%%%%%%%%%%
\section{Symmetry group for the global slice Dirac operator}
For a domain $U$ in $\Real^{m}$, a diffeomorphism $\phi: U\longrightarrow \mathbb{R}^{m}$ is said to be conformal if, for each $\bx\in U$ and each $\mathbf{u,v}\in TU_{\bx}$, the angle between $\mathbf{u}$ and $\mathbf{v}$ is preserved under the corresponding differential at $\bx$, $d\phi_{\bx}$.
For $m\geq 3$, a theorem of Liouville tells us that the only conformal transformations are M\"obius transformations. Ahlfors and Vahlen showed that any M\"{o}bius transformation on $\mathbb{R}^{m} \cup \{\infty\}$ can be expressed as $y=(a\bx+b)(c\bx+d)^{-1}$ with $a,\ b,\ c,\ d\in \mathcal{C}l_m$ satisfying the following conditions \cite{Lou}:
\begin{eqnarray*}
&&1.\ a,\ b,\ c,\ d\ are\ all\ products\ of\ vectors\ in\ \mathbb{R}^{m}.\\
&&2.\ a\tilde{b},\ c\tilde{d},\ \tilde{b}c,\ \tilde{d}a\in\mathbb{R}^{m}.\\
&&3.\ a\tilde{d}-b\tilde{c}=\pm 1.
\end{eqnarray*}
 Since $y=(a\bx+b)(c\bx+d)^{-1}=ac^{-1}+(b-ac^{-1}d)(c\bx+d)^{-1}$, a conformal transformation can be decomposed as compositions of translation, dilation, reflection and inversion. This gives an \emph{Iwasawa decomposition} for M\"obius transformations. See \cite{Li} for more details. Further, if we rewrite a M\"obius transformation $(a\bx+b)(c\bx+d)^{-1}$ in terms of a $2\times 2$ Clifford valued matrix $\begin{pmatrix}a & b\\ c &d \end{pmatrix}$, then the set of all these matrices is known as the special Ahlfors-Vahlen group, denoted by $SAV(\Rm)$. The four basic transformations in the Iwasawa decomposition correspond to the following four types of Ahlfors-Vahlen matrices.
\begin{enumerate}
\item Translation: $\begin{pmatrix}1 & \bs{b}\\ 0 &1 \end{pmatrix}$, where $b\in\Rm$, inducing M\"obius transformations $\bs{y}=\bx+\bs{b}$.
\item Dilation: $\begin{pmatrix}\lambda & 0\\ 0 &\lambda^{-1} \end{pmatrix}$. where $\lambda\in\Real\backslash\{0\}$, inducing M\"obius transformations $\bs{y}=\lambda^2\bx$.
\item Reflection: $\begin{pmatrix}\bs{a} & 0\\ 0 &-\bs{a}^{-1} \end{pmatrix}$, where $\bs{a}\in\Sm$, inducing M\"obius transformations $\bs{y}=\bs{a}\bx \bs{a}$.

\item Inversion: $\begin{pmatrix}0 & 1\\ -1 & 0 \end{pmatrix}$, inducing M\"obius transformations $\by=-\bx^{-1}$.
\end{enumerate}
Now, let us define a subgroup of the M\"obius group on $\Rpq$ as follows.
\begin{eqnarray}\label{GAV}
GRAV(\Real^{p+q})=
\left\langle
\bpm
1& \bs{b}\\0&1
\epm,
\bpm
\lambda&0\\0&\lambda^{-1}
\epm,
\bpm
\bs{a}&0\\ 0&\bs{a}^{-1}
\epm,
\bpm
0&1\\-1&0
\epm
\right\rangle,
\end{eqnarray}
where $\bs{b}\in\Real^p,\ \bs{a}\in\Sq$ and $\lambda\in\Real\backslash\{0\}$. When $p=1$, this is the symmetry group for slice monogenic functions, see \cite{Co} for more details. Here, we claim that $GRAV(\Real^{p+q})$ is the symmetry group of the global slice Dirac operator $G_{\bx}$, in other words, the space of null solutions to the slice Dirac operator is invariant with respect to the transformations in $GRAV(\Real^{p+q})$. This result can be justified immediately by the intertwining operators of $G_{\bx}$ under transformations in $GRAV(\Rpq)$ given below.
\begin{theorem}\label{slice}
Let $\by=\varphi(\bx)=(a\bx+b)(c\bx+d)^{-1}\in GRAV(\Real^{p+q})$, and let $f\in C^1(U)$, where $U$ is a domain in $\Real^{p+q}\backslash\Real^p$. Then, we have
\begin{eqnarray}\label{inter}
G_{\by}f(\by)=\bigg(\frac{c\bx+d}{\lv c\bx+d\rv^{p+3}}\bigg)^{-1}G_{\bx}\frac{\widetilde{c\bx+d}}{\lv c\bx+d\rv^{p+1}}f((a\bx+b)(c\bx+d)^{-1}).
\end{eqnarray}
\end{theorem}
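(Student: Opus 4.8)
The natural approach is to use the Iwasawa decomposition of $GRAV(\Rpq)$ recalled above. Every $\varphi\in GRAV(\Rpq)$ is a finite composition of translations $\by=\bx+\bs b$ with $\bs b\in\Real^p$, dilations $\by=\lambda^2\bx$, reflections $\by=\bs a\bx\bs a$ with $\bs a\in\Sq$ (up to the sign of the generator), and the inversion $\by=-\bx^{-1}$. Hence it suffices to prove (\ref{inter}) for each of these four generators and then to glue: one checks that the right-hand side of (\ref{inter}) is stable under composition, that is, if $\varphi=\varphi_1\circ\varphi_2$ corresponds to a product $M_1M_2$ of Ahlfors--Vahlen matrices, then the standard denominator identity $c\bx+d=(c_1\varphi_2(\bx)+d_1)(c_2\bx+d_2)$, the multiplicativity of $\lv\,\cdot\,\rv$ on such products, and the fact that reversion reverses products combine the weight factors $\tfrac{\widetilde{c\bx+d}}{\lv c\bx+d\rv^{p+1}}$ and $\bigl(\tfrac{c\bx+d}{\lv c\bx+d\rv^{p+3}}\bigr)^{-1}$ so that (\ref{inter}) for $\varphi_1$ and $\varphi_2$ forces it for $\varphi$. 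So the whole content lies in the four generators, and only the inversion is really involved.

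The first three generators are disposed of by the chain rule and elementary Clifford identities. For a translation $\by=\bx+\bs b$ with $\bs b\in\Real^p$ both weights are $1$, and (\ref{inter}) reduces to the fact that a shift of the $\Real^p$-variables leaves $\Dxq$, $\Exq$ and $\bxq/\lv\bxq\rv^{2}$ untouched, hence commutes with $G_\bx$ --- this is precisely why $GRAV(\Rpq)$ admits only translations by $\Real^p$. For a dilation $\by=\lambda^{2}\bx$ the two weights are real scalars, the chain rule gives $G_\bx[f(\lambda^{2}\bx)]=\lambda^{2}(G_\by f)(\lambda^{2}\bx)$, and the scalars match. For the reflection $\by=\bs a\bx\bs a$, the map fixes $\bxp$ and reflects $\bxq$ inside $\Real^q$, so $\lv\bxq\rv$ and the form of $\Dxp$ and $\Exq$ are preserved while $\bxq/\lv\bxq\rv^{2}$ acquires a conjugation by $\bs a$; using $\bs a e_k=-e_k\bs a$ for $k\le p$ (hence $\bs a e_k\bs a=e_k$) and $\bs a^{2}=-1$, a short computation gives $G_\bx[\bs a\,f(\by)]=\bs a\,(G_\by f)(\by)$, which together with the weights $\pm\bs a$ for the Vahlen matrix $\bpm\bs a&0\\0&-\bs a^{-1}\epm$ yields (\ref{inter}). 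The only delicate point here is to align the sign of the induced transformation with the signs produced by these Clifford manipulations.

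The inversion $\by=-\bx^{-1}=\bx/\lv\bx\rv^{2}$ is the main obstacle. Put $\psi(\bx)=\bx/\lv\bx\rv^{2}$, with Jacobian $\partial_{x_k}\psi_j=\lv\bx\rv^{-2}\delta_{jk}-2x_jx_k\lv\bx\rv^{-4}$. A computation shows that neither $\Dxp$ nor $\tfrac{\bxq}{\lv\bxq\rv^{2}}\Exq$ intertwines by itself under $\psi$: the cross terms $x_jx_k$ with $j\le p<k$ and with $k\le p<j$ couple the Dirac part to the Euler part and assemble into the full Euler operator $\Ey$. The outcome is the twisted identity
\[
G_\bx\bigl[h\circ\psi\bigr]=\frac{1}{\lv\bx\rv^{2}}\,(G_\by h)\circ\psi-\frac{2\bx}{\lv\bx\rv^{2}}\,(\Ey h)\circ\psi .
\]
Now the weighted composition $\tfrac{\widetilde{c\bx+d}}{\lv c\bx+d\rv^{p+1}}\,f(\psi(\bx))=-\bx\,\lv\bx\rv^{-(p+1)}f(\psi(\bx))$ is again of the form $h\circ\psi$, namely with $h(\by)=-\by\,\lv\by\rv^{p-1}f(\by)$; applying the twisted identity to this $h$, expanding $G_\by h$ and $\Ey h$ by the Leibniz rule, and multiplying on the left by $\bigl(\tfrac{c\bx+d}{\lv c\bx+d\rv^{p+3}}\bigr)^{-1}=\lv\bx\rv^{p+1}\bx$, all the auxiliary Euler-type and $\Eyp$-terms cancel and what remains is exactly $(G_\by f)\circ\psi$, i.e.\ (\ref{inter}). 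Carrying out this cancellation --- balancing the Dirac, Euler and weight contributions against one another --- is the delicate step.

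The exponents $p+1$ and $p+3$, and indeed all the generators except the reflections, can also be understood through a slice picture. On a slice $\Real^p\oplus\bs\omega\Real$ with $\bs\omega\in\Sq$, write $\bxq=t\bs\omega$; then $\tfrac{\bxq}{\lv\bxq\rv^{2}}\Exq$ acts on restrictions as $\bs\omega\partial_t$, so $G_\bx$ restricts to $\Dxp+\bs\omega\partial_t$, which is the Dirac operator of the $(p+1)$-dimensional space with orthonormal frame $\{e_1,\dots,e_p,\bs\omega\}$. Translations by $\Real^p$, dilations and the inversion each preserve such a slice and restrict there to a M\"obius transformation of $\Real^{p+1}$, so on every slice (\ref{inter}) is the classical intertwining formula for the Dirac operator in $\Real^{p+1}$ --- which is where the exponents $p+1$ and $p+3$ come from --- and letting $\bs\omega$ range over $\Sq$ gives (\ref{inter}) on all of $\Rpq\backslash\Real^p$. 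The reflections permute the slices and change the frame, and are cleanest to treat by the direct computation above.
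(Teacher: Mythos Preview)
Your approach is correct and essentially the same as the paper's: both reduce via the Iwasawa decomposition to the four generators, dispatch translation, dilation and reflection by direct chain-rule/Clifford computations, and handle the inversion through the operator identity $G_{\bx}=\lv\by\rv^{2}G_{\by}-2\by\Ey$ (your ``twisted identity'') applied to the weighted function $\by\lv\by\rv^{p-1}f(\by)$. Your explicit check that the weights compose correctly under products of Ahlfors--Vahlen matrices, and your closing slice-picture explanation of why the exponents $p+1$ and $p+3$ arise from the classical $\Real^{p+1}$ Dirac intertwiner, are helpful additions that the paper leaves implicit.
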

\begin{remark}
The two terms
%\begin{align*}
$\bigg(\displaystyle\frac{c\bx+d}{\lv c\bx+d\rv^{p+3}}\bigg)^{-1}$ and  $\displaystyle\frac{\widetilde{c\bx+d}}{\lv c\bx+d\rv^{p+1}}$
%\end{align*}
are usually called the weight functions of $G_{\bx}$ under the transformations in $GRAV(\Real^{p+q})$.
\end{remark}
 \begin{proof}
 According to the Iwasawa decomposition, we only need to show that \eqref{inter} is true for the four basic transformations respectively. More specifically, we consider the following four transformations.
\begin{enumerate}
%%%%
\item $\by=\varphi(\bx)=\bx+\bs{b},\ \bs{b}\in\Real^{p}$, in this case, $a=1,\ c=0,\ d=1$.
\par
It is easy to observe that $\partial_{y_i}=\partial_{x_i}$ and $\by_q=\bx_q$. Then, we immediately have
\be
G_{\by}=\sum_{i=1}^pe_i\partial_{y_i}+\frac{\by_q}{\lv\by_q\rv^2}\sum_{i=p+1}^{p+q}y_i\partial_{y_i}=G_{\bx}.
\ee
%%%%%%%%%%%%
\item $\by=\varphi(\bx)=\lambda^2\bx,\ \lambda\in\Real\backslash\{0\}$, in this case, $a=\lambda=d^{-1},\ b=c=0$.
\par
It is also easy to see that $\partial_{y_i}=\lambda^{-2}\partial_{x_i}$, and these give us
\be
G_{\by}=\sum_{i=1}^pe_i\partial_{y_i}+\frac{\by_q}{\lv\by_q\rv^2}\sum_{i=p+1}^{p+q}y_i\partial_{y_i}=\lambda^{-2}G_{\bx}.
\ee
%%%%
\item $\by=\varphi(\bx)=\bs{a}\bx \bs{a}^{-1},\ \bs{a}\in\Sq$, which gives us that $\byp=-\bxp$ and $\byq=\bs{a}\bxq \bs{a}^{-1}$. In this case, $b=c=0,\ d=\bs{a}^{-1}$.
\par
Thus, we have
\begin{align*}
G_{\by}=&\Dyp+\frac{\by_q}{\lv\by_q\rv^2}\Eyq=-\Dxp+\frac{\bs{a}\bx_q \bs{a}^{-1}}{\lv\bx_q\rv^2}\Eyq\\
=&\bs{a}\bigg(\Dxp+\frac{\bx_q}{\lv\bx_q\rv^2}\Exq\bigg)\bs{a}^{-1}=\bs{a}G_{\bx} \bs{a}^{-1}.
\end{align*}
To see the equation above is in the form of \eqref{inter}, we only need to see that $\widetilde{\bs{a}^{-1}}=\bs{a}$ and $\lv\bs{a} \rv=1$ for $\bs{a}\in\Sq$.
%%%%

\item $\by=\varphi(\bx)=-\bx^{-1}=\displaystyle\frac{\bx}{\lv\bx\lv^2}$, in this case, $a=d=0,\ b=-c=1$. In other words, we need to show that
\be
\bigg(\frac{\bx}{\lv\bx\lv^{p+3}}\bigg)^{-1}G_{\bx}\frac{\bx}{\lv\bx\lv^{p+1}}=G_{\by}.
\ee
First, we notice that $y_i=\displaystyle\frac{x_i}{\lv\bx\lv^2}$, which gives us that
\begin{align*}
\partial_{x_j}&=\sum_{k=1}^{p+q}\frac{\partial y_k}{\partial x_j}\partial_{y_k}=
\sum_{k=1,k\neq j}^{p+q}\frac{-2x_jx_k}{\lv\bx\lv^4}\partial_{y_k}+\bigg(\frac{1}{\lv\bx\lv^2}-\frac{2x_j^2}{\lv\bx\lv^4}\bigg)\\
=&\frac{1}{\lv\bx\lv^2}\partial_{y_j}-\sum_{k=1}^{p+q}2y_jy_k\partial_{y_k}
=\lv\by\lv^2\partial_{y_j}-2y_j\Ey.
\end{align*}
This leads to
\begin{align*}
&G_{\bx}=\sum_{i=1}^pe_i\partial_{x_i}+\frac{\bxq}{\lv\bxq\lv^2}\Exq\\
=&\sum_{i=1}^pe_i\bigg(\lv\by\lv^2\partial_{y_j}-2y_j\Ey\bigg)+\frac{\byq}{\lv\byq\lv^2}\lv\by\lv^2\sum_{i=p+1}^{p+q}\frac{y_i}{\lv\by\lv^2}\bigg(\lv\by\lv^2\partial_{y_j}-2y_j\Ey\bigg)\\
=&\lv\by\lv^2\sum_{i=1}^pe_i\partial_{y_i}-2\byp\Ey+\frac{\byq}{\lv\byq\lv^2}\lv\by\lv^2\bigg(\Eyq-2\frac{\lv\byq\lv^2}{\lv\by\lv^2}\Ey\bigg)\\
=&\lv\by\lv^2\Dyp-2\by\Ey+\frac{\byq}{\lv\byq\lv^2}\lv\by\lv^2\Eyq=\lv\by\lv^2G_{\by}-2\by\Ey.
\end{align*}
Now, we calculate
\begin{align*}
&G_{\bx}\frac{\bx}{\lv\bx\lv^{p+1}}=(\lv\by\lv^2G_{\by}-2\by\Ey)\by\lv\by\lv^{p-1}\\
=&\lv\by\lv^2\sum_{i=1}^pe_i\bigg(e_i\lv\by\lv^{p-1}+(p-1)y_i\by\lv\by\lv^{p-3}+\by\lv\by\lv^{p-1}\partial_{y_i}\bigg)-2p\by^2\lv\by\lv^{p-1}\\
&+2\lv\by\lv^{p+1}\Ey+\frac{\byq}{\lv\byq\lv^2}\lv\by\lv^2\sum_{i=p+1}^{p+q}y_i\bigg(e_i\lv\by\lv^{p-1}+(p-1)y_i\by\lv\by\lv^{p-3}+\by\lv\by\lv^{p-1}\partial_{y_i}\bigg)\\
=&\lv\by\lv^2\bigg(-p\lv\by\lv^{p-1}+(p-1)\byp\by\lv\by\lv^{p-3}+\sum_{i=1}^p(-\by e_i-2y_i)\lv\by\lv^{p-1}\partial_{y_i} \bigg)+2p\lv\by\lv^{p+1}\\
&+2\lv\by\lv^{p+1}\Ey+\frac{\byq}{\lv\byq\lv^2}\lv\by\lv^2\bigg(\byq\lv\by\lv^{p-1}+(p-1)\lv\byq\lv^2\by\lv\by\lv^{p-3}+\by\lv\by\lv^{p-1}\Eyq\bigg)\\
=&-p\lv\by\lv^{p+1}+(p-1)\byp\by\lv\by\lv^{p-1}-\by\lv\by\lv^{p+1}\Dyp-2\lv\by\lv^{p+1}\Eyp+2\lv\by\lv^{p+1}\Ey\\
&-\lv\by\lv^{p+1}+(p-1)\byq\by\lv\by\lv^{p-1}+\frac{\byq\by}{\lv\byq\lv^2}\lv\by\lv^{p+1}\Eyq\\
=&-\by\lv\by\lv^{p+1}\Dyp-2\lv\by\lv^{p+1}\Eyp+2\lv\by\lv^{p+1}\Ey+\frac{-\by\byq-2\lv\byq\lv^2}{\lv\byq\lv^2}\lv\by\lv^{p+1}\Eyq\\
=&-\by\lv\by\lv^{p+1}G_{\by}.
\end{align*}
Hence, we have that
\be
\bigg(\frac{\bx}{\lv\bx\lv^{p+3}}\bigg)^{-1}G_{\bx}\frac{\bx}{\lv\bx\lv^{p+1}}=-\by\lv\by\lv^{-p-3}\by\lv\by\lv^{p+1}G_{\by}=G_{\by},
\ee
\end{enumerate}
which completes the proof.
\end{proof}
\begin{remark}
When $p=1,\ q=n$, the slice Dirac operator is given by $$G_{\bx}=e_1\partial_{\bx_1}+\frac{\bx_{n}}{\lv\bx_n\lv^2}\sum_{i=2}^{n+1}x_i\partial_{x_i}.$$
One notices that this operator is slightly different from the global slice Dirac operator introduced in \cite{CoG}
\be
{G}=\lv\bx_n\lv^2\partial_{\bx_0}+\bx_{n}\sum_{i=1}^{n}x_i\partial_{x_i},
\ee
 which acts on functions defined on $\Real\oplus\Real^n$. However, one can rewrite
\begin{align*}
G_{\bx}=\frac{e_1}{\lv\bx_n\lv^2}\bigg(\lv\bx_n\lv^2\partial_{\bx_1}-\sum_{i=2}^{n+1}(e_1e_i)x_i\sum_{i=1}^{n}x_i\partial_{x_i}\bigg),
\end{align*}
and let $e_i^{\dagger}=-e_1e_i$, so one can see that the operator in the parentheses above is ${G}$. This suggests that with a similar argument as above, one can obtain a similar result for ${G}$ as follows. This result also justifies the invariance of the slice monogenic operator given in \cite[Theorem 3.1]{CoG}.
\end{remark}
\begin{corollary}
Let $\by=\varphi(\bx)=(a\bx+b)(c\bx+d)^{-1}\in GRAV(\Real\oplus\Real^p)$, and $f\in C^1(U)$, where $U$ is a domain in $\Real\oplus\Real^p$. Then, we have
\begin{eqnarray}\label{inter}
Gf(\by)=\big(c\bx+d\big)^{-1}G\frac{\overline{c\bx+d}}{\lv c\bx+d\lv^{2}}f((a\bx+b)(c\bx+d)^{-1}).
\end{eqnarray}
\end{corollary}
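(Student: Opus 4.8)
The cleanest route is to deduce the statement from Theorem~\ref{slice} in the special case $p=1$, using the algebraic link between $G$ and the slice Dirac operator recorded in the remark above: on $\Real\oplus\Real^{p}$ one has $G_{\bx}=\tfrac{e_1}{\lv\underline{\bx}\rv^{2}}\,G$, where $G$ is the operator of \cite{CoG} written with its $\Real^{p}$-part in the basis $e_i^{\dagger}=-e_1e_i$ and $\bx=x_0+\underline{\bx}$; since $e_1^{2}=-1$ the factor $e_1$ is a unit and this identity may be solved for $G$. One further needs the conformal-factor identity $\lv\underline{\by}\rv=\lv\underline{\bx}\rv\,\lv c\bx+d\rv^{-2}$, checked one generator at a time exactly as the norm computations in the proof of Theorem~\ref{slice} (it is trivial for the translation and the reflection, reads $\underline{\by}=\lambda^{2}\underline{\bx}$ for the dilation and $\underline{\by}=\underline{\bx}/\lv\bx\rv^{2}$ for the inversion).

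With these in hand, the plan is: put $\by=\varphi(\bx)$, substitute $G_{\by}f(\by)=\tfrac{e_1}{\lv\underline{\by}\rv^{2}}Gf(\by)$ on the left and $G_{\bx}=\tfrac{e_1}{\lv\underline{\bx}\rv^{2}}G$ on the right of the intertwining identity of Theorem~\ref{slice} with $p=1$ (so its exponents read $4$ and $2$), and solve for $Gf(\by)$. By the conformal-factor identity the powers of $\lv\underline{\bx}\rv$, $\lv\underline{\by}\rv$ and $\lv c\bx+d\rv$ collapse, leaving $Gf(\by)=e_1^{-1}(c\bx+d)^{-1}e_1\;G\bigl[\tfrac{\widetilde{c\bx+d}}{\lv c\bx+d\rv^{2}}f(\varphi(\bx))\bigr]$; it then remains to see that conjugation by the unit $e_1$ replaces $e_1^{-1}(c\bx+d)^{-1}e_1$ by $(c\bx+d)^{-1}$ and the reversion $\widetilde{c\bx+d}$ by the Clifford conjugation $\overline{c\bx+d}$. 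By the Iwasawa decomposition this last step need only be verified on the four generators, where $c\bx+d$ equals $1$, $\lambda^{-1}$, $\bs{a}^{-1}$ and $-\bx$, each reducing to an elementary Clifford identity, and the general case then follows by composing the weight factors as in Theorem~\ref{slice}. Alternatively one can run the four-case argument of Theorem~\ref{slice} directly for $G$; translation and dilation are immediate, and the inversion $\by=-\bx^{-1}$ is again the substantial case, handled by differentiating $y_i=x_i/\lv\bx\rv^{2}$ to get $\partial_{x_j}=\lv\by\rv^{2}\partial_{y_j}-2y_j\Ey$, rewriting $G$ via this change of variables in terms of $\partial_{y_j}$ and the Euler operators, and expanding $G\bigl[\tfrac{\bx}{\lv\bx\rv^{2}}f(\varphi(\bx))\bigr]$ (note $\overline{-\bx}=\bx$). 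Finally, since $G$ and both weight functions are continuous, the identity, a priori valid on $U\setminus\Real$, extends to all of $U$.

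I expect the main obstacle to be the Clifford bookkeeping in this reconciliation of the weight functions. One must track exactly how the auxiliary unit $e_1$ moves past the basis-changed operator $G$, and why doing so exchanges reversion for conjugation; concretely, the reflection parameter $c\bx+d=\bs{a}^{-1}$, re-expressed in the basis $e_i^{\dagger}$, acquires an extra factor $e_1$, and it is precisely this factor that supplies the correct sign on the $\partial_{x_0}$-term. In the inversion step the analogous subtlety is that the powers of $\lv\bx\rv$ attached to the two weight functions must interlock with the $\lv\by\rv$'s produced by the change of variables so that the surplus cancels and the outer factor is the bare $(c\bx+d)^{-1}$ rather than $\bigl(\tfrac{c\bx+d}{\lv c\bx+d\rv^{4}}\bigr)^{-1}$.
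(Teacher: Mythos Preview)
The paper gives no explicit proof; the corollary is stated immediately after the remark with the justification ``with a similar argument as above, one can obtain a similar result for $G$.'' In other words, the paper's intended argument is precisely your alternative route: rerun the four-generator Iwasawa check of Theorem~\ref{slice} directly for $G$ in the paravector setting. That part of your proposal is correct and matches the paper.

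Your primary route---transferring the intertwining identity of Theorem~\ref{slice} through the algebraic link $G_{\bx}=\tfrac{e_1}{\lv\bx_n\rv^{2}}G$---is a different idea, and the computation down to
\[
Gf(\by)=e_1^{-1}(c\bx+d)^{-1}e_1\;G\Bigl[\tfrac{\widetilde{c\bx+d}}{\lv c\bx+d\rv^{2}}\,f(\varphi(\bx))\Bigr]
\]
is fine. The remaining step, however, does not work as you describe. The factor $e_1$ sits between $(c\bx+d)^{-1}$ and $G$; it never touches the right weight $\widetilde{c\bx+d}$, so ``conjugation by $e_1$ replaces reversion by Clifford conjugation'' is not what is happening. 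More substantively, the quantity $c\bx+d$ in Theorem~\ref{slice} (with $p=1$) lives in the vector setting of $\Clm_{1+n}$, whereas $c\bx+d$ in the corollary is a paravector built from the $e_i^{\dagger}$; these are \emph{different} Clifford elements related by left multiplication by $-e_1$. For instance, in the reflection case Theorem~\ref{slice} gives right weight $\widetilde{\bs{a}^{-1}}=-\bs{a}$, while the corollary requires $\overline{(\bs{a}')^{-1}}=\bs{a}'=-e_1\bs{a}$; since $e_1$ anticommutes with the coefficients of $G$, one cannot simply absorb the discrepancy into $G$. The same mismatch appears for the inversion: under $\bx\mapsto-e_1\bx$ the vector inversion $\bx\mapsto\bx/\lv\bx\rv^{2}$ becomes $\bx'\mapsto\bx'/\lv\bx'\rv^{2}$, not the paravector inversion $\bx'\mapsto-\overline{\bx'}/\lv\bx'\rv^{2}$.

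To repair this you would have to set up the correspondence between $GRAV(\Real^{1+n})$ and $GRAV(\Real\oplus\Real^{p})$ explicitly and track both weight factors through it generator by generator---which is your stated plan, but at that point you are effectively redoing the four-case computation and the detour through Theorem~\ref{slice} buys nothing. The direct verification (your alternative, and the paper's suggestion) is the cleaner and complete argument.
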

It is worth pointing out that the $G$ operators on the left and right sides above are the $G$ operators with respect to $\by$ and $\bx$ respectively.
%%%%%%%%%%%%%%
\section{Intertwining operators for iterated global slice Dirac operators}
In this section, we show that the iterated slice Dirac operator $G_{\bx}^{l}$ also has $GRAV(\Rpq)$ as its symmetry group. However, we prove this for $l$ odd and even separately, since the intertwining operators for the odd and even cases are different.
\par
The reason that we use $G_{\bx}$ instead of $G$ to construct the iterated slice Dirac operator is the following: $G$ plays the same role as the generalized Dirac operator $D_0=\partial_{x_0}+\sum_{i=1}^ne_i\partial_{x_i}$ does in classical Clifford analysis. It is well-known that  $D_0^k$ is not conformally invariant anymore when $k>1$. In contrast, the $k$th power of the Dirac operator is also conformally invariant for $k>1$. More details can be found in \cite{Peetre}. The same phenomenon happens here for $G$ and $G_{\bx}$.
\begin{theorem}
Let $\by=\varphi(\bx)=(a\bx+b)(c\bx+d)^{-1}\in GRAV(\Real^{p+q})$, and let $f$ be a sufficiently smooth function over a domain $U\subset\Real^{p+q}\backslash\Real^p$. Then, we have
\be
G_{\by}^lf(\by)=\bigg(\frac{c\bx+d}{\lv c\bx+d\lv ^{p+2+l}}\bigg)^{-1}G^l_{\bx}\frac{\widetilde{c\bx+d}}{\lv c\bx+d\lv ^{p+2-l}}f((a\bx+b)(c\bx+d)^{-1}),\quad l\ odd.
\ee
\end{theorem}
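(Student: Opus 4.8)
The plan is to reduce, exactly as in the proof of Theorem~\ref{slice}, to the four generators of $GRAV(\Rpq)$ via the Iwasawa decomposition, and then to run an induction on $l$ carried out jointly with the companion statement for $l$ even (whose weight functions turn out to be pure scalar powers of $\lv c\bx+d\rv$): the case of parity $l$ is deduced from parity $l-1$, the base case $l=1$ being Theorem~\ref{slice}. For a translation, dilation and reflection the proof of Theorem~\ref{slice} already identifies $G_{\by}$ with $G_{\bx}$, with $\lambda^{-2}G_{\bx}$, and with $\bs{a}G_{\bx}\bs{a}^{-1}$; raising to the $l$-th power is immediate, and a short check (using $\lv\lambda^{-1}\rv^{p+2\pm l}$, $\lv\bs{a}\rv=1$, $\widetilde{\bs{a}^{-1}}=\bs{a}^{-1}$) matches the resulting prefactors with the two weight functions in the statement. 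Thus the whole content is concentrated in the inversion $\by=-\bx^{-1}=\bx/\lv\bx\rv^{2}$, for which I normalise $c\bx+d=\bx$ (the ambiguity $(c,d)\mapsto(-c,-d)$ leaves the formula unchanged).

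The technical core is a pair of commutation identities for $G_{\bx}$ against left multiplication by $\bx$: for $l$ odd,
\[
G_{\bx}^{l}(\bx g)=-\bx\,G_{\bx}^{l}g-(p+l)\,G_{\bx}^{l-1}g-2\,\Ex G_{\bx}^{l-1}g,
\]
and for $l$ even, $G_{\bx}^{l}(\bx g)=\bx\,G_{\bx}^{l}g-l\,G_{\bx}^{l-1}g$. Both follow by induction on $l$ from the base case $G_{\bx}(\bx g)=-(p+1)g-\bx\,G_{\bx}g-2\Ex g$ (obtained from $e_i\bx=-\bx e_i-2x_i$, $\sum_{i\le p}e_i^{2}=-p$ and $\bx_q^{2}=-\lv\bx_q\rv^{2}$, exactly as $G_{\bx}\frac{\bx}{\lv\bx\rv^{p+1}}$ was computed in the proof of Theorem~\ref{slice}) together with the relation $[G_{\bx},\Ex]=G_{\bx}$. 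One also uses $G_{\bx}\lv\bx\rv^{2s}=2s\lv\bx\rv^{2s-2}\bx$, which permits an analogous expansion of $G_{\bx}^{l}(\lv\bx\rv^{2s}g)$.

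For the inductive step (inversion, $l$ odd) I write $G_{\by}^{l}f=G_{\by}\big(G_{\by}^{l-1}f\big)$, apply Theorem~\ref{slice} to $G_{\by}^{l-1}f$ to strip off the outermost operator, and substitute the (even) intertwining formula for $\big(G_{\by}^{l-1}f\big)\circ\varphi$. Using only that the even-case left weight is a scalar power of $\lv\bx\rv$, the two weight functions then sandwiched between the two Dirac factors collapse to a single monomial $\bx\lv\bx\rv^{l-1}$; commuting $G_{\bx}$ through it by the base identity above produces three terms — one in which $G_{\bx}$ has merged with $G_{\bx}^{l-1}$ into $G_{\bx}^{l}$, and two of order $\le l-1$ in $G_{\bx}$, which are rewritten, via the inductive formula read backwards and the substitutions $\Ex(h\circ\varphi)=-(\Ey h)\circ\varphi$ and $[G_{\by},\Ey]=G_{\by}$, as multiples of $(G_{\by}^{l-1}f)\circ\varphi$ and $(\Ey G_{\by}^{l-1}f)\circ\varphi$. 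Independently, one expands the claimed right-hand side
\[
\bigg(\frac{\bx}{\lv\bx\rv^{p+2+l}}\bigg)^{-1}G_{\bx}^{l}\bigg(\frac{\bx}{\lv\bx\rv^{p+2-l}}(f\circ\varphi)\bigg)
\]
using the odd commutation identity; the two expansions then coincide term by term (after invoking $\bx^{2}=-\lv\bx\rv^{2}$ and the even-case right weight at the right places), which proves the odd formula for the inversion — and, symmetrically, the even one.

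The main obstacle is the bookkeeping rather than any single hard estimate: establishing the two commutation identities with the correct odd/even dichotomy and constants, fixing the exact exponents of the even companion's (scalar) weight functions so that its $p+2\pm(l-1)$ mesh with the target $p+2\pm l$ — the cancellation of the order-$(l-1)$ terms is precisely what forces this calibration — and keeping every Clifford-algebraic sign straight: reversions, $\bx^{2}=-\lv\bx\rv^{2}$, $\widetilde{c\bx+d}(c\bx+d)^{-1}=\pm1$, and the inverses of products of vectors that appear in the weight functions. The underlying differential-operator identities are elementary; almost all the effort is in organising this web of exponent and sign checks.
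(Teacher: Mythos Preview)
Your proposal is correct and does reach the same endpoint, but the route is genuinely different from the paper's. The paper proves the odd case by a step-$2$ induction, passing from $l=2k-1$ directly to $l=2k+1$; the engine is Lemma~\ref{lemma1}(a), an explicit expansion of $G_{\bx}^{2}\dfrac{\bx}{\lv\bx\rv^{m}}$, together with the commutation identities (b)--(e) (which are exactly your odd/even formulas for $G^{j}\bx$ and the companion for $\lv\by\rv^{2}$). The even theorem is then proved separately by its own step-$2$ induction. You instead run a step-$1$ leap-frog: deduce odd $l$ from the even $l-1$ companion by peeling off one $G_{\by}$ via Theorem~\ref{slice}, and symmetrically. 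This avoids Lemma~\ref{lemma1}(a) entirely: after collapsing the inner weights to $\bx\lv\bx\rv^{l-1}$, a single application of the base identity $G_{\bx}(\bx g)=-(p+1)g-2\Ex g-\bx G_{\bx}g$ produces three terms which, once the left weight $-\bx\lv\bx\rv^{p+1}$ is attached, match term by term the three terms coming from the odd commutation identity applied to the claimed right-hand side with its left weight $-\bx\lv\bx\rv^{p+l}$ (the $\bx^{2}=-\lv\bx\rv^{2}$ in the top-order term supplies the missing $\lv\bx\rv^{l-1}$). The trade-off: the paper's approach keeps the two parities cleanly decoupled at the cost of the heavier identity (a); your approach is lighter per step and explains why the odd and even weights are calibrated to each other, but forces you to carry both statements through the induction simultaneously. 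Either way the underlying commutation lemmas are the same.
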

The strategy is similar as in the proof of the previous theorem and it is a straightforward check that the theorem above is true when $\varphi(\bx)$ is a translation, dilation or reflection. Hence, we only need to prove it is also true for the inversion. More specifically, we need to show that
\begin{align}\label{iterated}
\bigg(\frac{\bx}{\lv \bx\lv ^{p+2+l}}\bigg)^{-1}G_{\bx}^l\frac{\bx}{\lv \bx\lv ^{p+2-l}}=G_{\by}^l
\end{align}
for $\by=-\bx^{-1}\in\Rpq$. To show this, we need the following technical lemma. It is worth pointing out that the functions on the left sides of the equations below are actually considered as multiplication operators.
\begin{lemma}\label{lemma1}
Let $\by=-\bx^{-1}\in\Rpq$, then we have
\begin{align*}
&(a).\ G_{\bx}^2\frac{\bx}{\lv \bx\lv ^m}=-m(m-p-1)\frac{\bx}{\lv \bx\lv ^{m+2}}+2(m-p-1)\frac{\bx}{\lv \bx\lv ^{m+2}}\Ex\\
&\quad\quad\quad\quad\quad\quad\quad\ \ -2\lv \bx\lv ^{-m-2}G_{\by}+\frac{\bx}{\lv \bx\lv ^{m+4}}G_{\by}^2,\\
&(b).\ G_{\by}^{2k-2}\by=-(2k-2)G_{\by}^{2k-3}+\by G_{\by}^{2k-2},\\
&(c).\ G_{\by}^{2k-1}\by=-(p+2k-1+2\Ey)G_{\by}^{2k-2}-\by G_{\by}^{2k-1},\\
&(d).\ G_{\by}^{2k-2}\lv \by\lv ^2=-2(k-1)(p+2k-3+2\Ey)G_{\by}^{2k-4}+\lv \by\lv ^2G_{\by}^{2k-2}.\\
&(e).\ \Ex=-\Ey.
\end{align*}
\end{lemma}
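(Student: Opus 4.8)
The plan is to reduce the whole lemma to a short list of elementary commutation relations among $G_{\by}$, $\Ey$, the Clifford vector $\by$ and the scalar powers $\lv\by\rv^{s}$, together with the two substitution identities already obtained in the proof of Theorem~\ref{slice}: under $\by=-\bx^{-1}$ one has $\partial_{x_j}=\lv\by\rv^{2}\partial_{y_j}-2y_j\Ey$ and the operator identity $G_{\bx}=\lv\by\rv^{2}G_{\by}-2\by\Ey$, while $\lv\bx\rv=\lv\by\rv^{-1}$ and $\bx=\by\lv\by\rv^{-2}$. Part (e) is then immediate: multiplying $\partial_{x_j}=\lv\by\rv^{2}\partial_{y_j}-2y_j\Ey$ by $x_j=y_j\lv\by\rv^{-2}$ and summing over $j$ gives $\Ex=\Ey-2\Ey=-\Ey$ (equivalently, $r\partial_{r}$ reverses sign under $r\mapsto r^{-1}$).

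I would next record three building blocks, each a one-line direct computation. (i) From $\Dyp\lv\by\rv^{s}=s\lv\by\rv^{s-2}\byp$ and $\Eyq\lv\by\rv^{s}=s\lv\by\rv^{s-2}\lv\byq\rv^{2}$ one obtains $G_{\by}(\lv\by\rv^{s}\phi)=s\lv\by\rv^{s-2}\by\,\phi+\lv\by\rv^{s}G_{\by}\phi$ for every real $s$ and every $\phi$. (ii) Splitting $\by=\byp+\byq$, differentiating, and using $\byq^{2}=-\lv\byq\rv^{2}$, $\byp\byq=-\byq\byp$, and $e_i\byq=-\byq e_i$ for $i\le p$, one gets the commutator $G_{\by}\by=-(p+1)-2\Ey-\by G_{\by}$. (iii) Since $G_{\by}$ lowers the homogeneity degree by one, $[\Ey,G_{\by}]=-G_{\by}$, hence $G_{\by}^{j}\Ey=(\Ey+j)G_{\by}^{j}$ for all $j$. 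Taking $s=2$ in (i) gives $G_{\by}\lv\by\rv^{2}=2\by+\lv\by\rv^{2}G_{\by}$, and combining (i)--(iii) yields $G_{\by}^{2}\lv\by\rv^{2}=-2(p+1+2\Ey)+\lv\by\rv^{2}G_{\by}^{2}$.

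Parts (b) and (c) then follow by one simultaneous induction on $k$: the case $k=1$ of (c) is exactly (ii); applying $G_{\by}$ to (b) at level $k$ produces (c) at level $k$, and applying $G_{\by}$ to (c) at level $k$ produces (b) at level $k+1$, using (iii) to commute the Euler operator past powers of $G_{\by}$ where needed. Part (d) is an induction on $k$ with base case $k=1$ trivial, the step from level $k-1$ to level $k$ being $G_{\by}^{2k-2}\lv\by\rv^{2}=G_{\by}^{2}\bigl(G_{\by}^{2k-4}\lv\by\rv^{2}\bigr)$, after which one substitutes $G_{\by}^{2}\lv\by\rv^{2}=-2(p+1+2\Ey)+\lv\by\rv^{2}G_{\by}^{2}$ and $G_{\by}^{2}\Ey=(\Ey+2)G_{\by}^{2}$; the coefficients collapse to the stated ones after an elementary polynomial identity in $k$.

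Finally, for (a) I would first prove, for every real $a$ and every $\phi$, the operator identities $G_{\bx}\bigl(\lv\by\rv^{a}\by\,\phi\bigr)=(a-p+1)\lv\by\rv^{a+2}\phi-\lv\by\rv^{a+2}\by\,G_{\by}\phi$ and $G_{\bx}\bigl(\lv\by\rv^{a}\phi\bigr)=-a\,\lv\by\rv^{a}\by\,\phi+\lv\by\rv^{a+2}G_{\by}\phi-2\lv\by\rv^{a}\by\,\Ey\phi$, both by expanding $G_{\bx}=\lv\by\rv^{2}G_{\by}-2\by\Ey$ with the building blocks (i), (ii) and $\by^{2}=-\lv\by\rv^{2}$. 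Since $\frac{\bx}{\lv\bx\rv^{m}}=\lv\by\rv^{m-2}\by$, one application of the first identity (with $a=m-2$) gives $G_{\bx}\frac{\bx}{\lv\bx\rv^{m}}=(m-p-1)\lv\by\rv^{m}-\lv\by\rv^{m}\by\,G_{\by}$; applying $G_{\bx}$ once more to each of the two summands (using the two displayed identities with $a=m$), and then converting back via $\lv\by\rv^{m}\by=\frac{\bx}{\lv\bx\rv^{m+2}}$, $\lv\by\rv^{m+2}=\lv\bx\rv^{-m-2}$, $\lv\by\rv^{m+2}\by=\frac{\bx}{\lv\bx\rv^{m+4}}$ and $\Ey=-\Ex$, gives precisely (a) after collecting terms. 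The only real obstacle is the bookkeeping in this last step: two copies of $G_{\bx}$ must be carried through products of a scalar power of $\lv\by\rv$, the non-commuting vector $\by$ and $G_{\by}$, keeping the Euler-operator contributions on the correct side, and then the change of variables has to be performed without sign slips; the remainder is an induction that merely repackages the commutators (ii) and (iii).
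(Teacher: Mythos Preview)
Your argument is correct. The building blocks (i)--(iii) and the two $G_{\bx}$-identities you isolate are precisely the commutator relations that drive the computation, and your applications of them check out line by line (in particular, your two operator formulas $G_{\bx}\bigl(\lv\by\rv^{a}\by\,\phi\bigr)=(a-p+1)\lv\by\rv^{a+2}\phi-\lv\by\rv^{a+2}\by\,G_{\by}\phi$ and $G_{\bx}\bigl(\lv\by\rv^{a}\phi\bigr)=-a\lv\by\rv^{a}\by\,\phi+\lv\by\rv^{a+2}G_{\by}\phi-2\lv\by\rv^{a}\by\,\Ey\phi$ reproduce the paper's intermediate identities once converted back to $\bx$-variables).

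The overall route differs from the paper's in two places worth noting. For (a), the paper stays in $\bx$-coordinates, first computing $G_{\bx}\lv\bx\rv^{-m}$ and $G_{\bx}\frac{\bx}{\lv\bx\rv^{m}}$ directly and only substituting $G_{\bx}=\lv\by\rv^{2}G_{\by}-2\by\Ey$ at the very end; you instead pass to $\by$-variables immediately and work entirely there, which avoids a longer final expansion at the cost of having to set up the two auxiliary $G_{\bx}(\lv\by\rv^{a}\cdots)$ identities. For (d), the paper does not induct: it telescopes $G_{\by}^{2k-2}\lv\by\rv^{2}$ via repeated use of $G_{\by}\lv\by\rv^{2}=2\by+\lv\by\rv^{2}G_{\by}$ into the sum $\sum_{j=1}^{2k-2}2G_{\by}^{2k-2-j}\by\,G_{\by}^{j-1}+\lv\by\rv^{2}G_{\by}^{2k-2}$ and then applies (b) and (c) termwise. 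Your induction using the single relation $G_{\by}^{2}\lv\by\rv^{2}=-2(p+1+2\Ey)+\lv\by\rv^{2}G_{\by}^{2}$ together with $G_{\by}^{2}\Ey=(\Ey+2)G_{\by}^{2}$ is shorter and does not need (b) or (c) as input, so in fact your (d) is logically independent of (b)--(c), whereas the paper's is not. Either organization is fine; yours is somewhat more modular.
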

\begin{proof}
(a). This is indeed a straightforward calculation as follows. Firstly, we calculate
\begin{align*}
&G_{\bx}\lv \bx\lv ^{-m}=\bigg(\sum_{i=1}^pe_i\partial_{x_i}+\frac{\bx_q}{\lv \bx_q\lv ^2}\sum_{i=p+1}^{p+q}x_i\partial_{x_i}\bigg)\lv \bx\lv ^{-m}\\
=&\sum_{i=1}^pe_i(-mx_i\lv \bx\lv ^{-m-2}+\lv \bx\lv ^{-m}\partial_{x_i})+\frac{\bx_q}{\lv \bx_q\lv ^2}\sum_{i=p+1}^{p+q}x_i(-mx_i\lv \bx\lv ^{-m-2}+\lv \bx\lv ^{-m}\partial_{x_i})\\
=&-m\frac{\bxp}{\lv \bx\lv ^{m+2}}+\lv \bx\lv ^{-m}\Dxp+\frac{\bxq}{\lv \bxq\lv ^{2}}(-m\lv \bxq\lv ^2\lv \bx\lv ^{-m-2}+\lv \bx\lv ^{-m}\Exq)\\
=&-m\frac{\bx}{\lv \bx\lv ^{m+2}}+\lv \bx\lv ^{-m}G_{\bx}.
\end{align*}
Now, we check
\begin{align*}
&G_{\bx}\frac{\bx}{\lv \bx\lv ^m}=\bigg(\sum_{i=1}^pe_i\partial_{x_i}+\frac{\bx_q}{\lv \bx_q\lv ^2}\sum_{i=p+1}^{p+q}x_i\partial_{x_i}\bigg)\frac{\bx}{\lv \bx\lv ^m}\\
=&\sum_{i=1}^pe_i\bigg(\frac{e_i}{\lv \bx\lv ^m}-\frac{mx_i\bx}{\lv \bx\lv ^{m+2}}+\frac{\bx\partial_{x_i}}{\lv \bx\lv ^m}\bigg)+\frac{\bx_q}{\lv \bx_q\lv ^2}\sum_{i=p+1}^{p+q}x_i\bigg(\frac{e_i}{\lv \bx\lv ^m}-\frac{mx_i\bx}{\lv \bx\lv ^{m+2}}+\frac{\bx\partial_{x_i}}{\lv \bx\lv ^m}\bigg)\\
=&\frac{-p}{\lv \bx\lv ^m}-\frac{m\bxp\bx}{\lv \bx\lv ^{m+2}}+\sum_{i=1}^p\frac{-\bx e_i-2x_i}{\lv \bx\lv ^m}\partial_{x_i}+\frac{\bx_q}{\lv \bx_q\lv ^2}\bigg(\frac{\bxq}{\lv \bx\lv ^m}-\frac{m\lv \bxq\lv ^2\bx}{\lv \bx\lv ^{m+2}}+\frac{\bx\Exq}{\lv \bx\lv ^m}\bigg)\\
=&\frac{-p}{\lv \bx\lv ^m}-\frac{m\bxp\bx}{\lv \bx\lv ^{m+2}}-\frac{\bx\Dxp}{\lv \bx\lv ^m}-2\frac{\Exp}{\lv \bx\lv ^m}-\frac{1}{\lv \bx\lv ^m}-\frac{m\bxq\bx}{\lv \bx\lv ^{m+2}}-\frac{\bx\bxq\Exq}{\lv \bxq\lv ^2\lv \bx\lv ^m}-\frac{2\Exq}{\lv \bx\lv ^m}\\
=&(m-p-1)\lv \bx\lv ^{-m}-\frac{\bx}{\lv \bx\lv ^m}\Dxp-2\frac{\Exp}{\lv \bx\lv ^m}-\frac{-\bxq\bx-2\lv \bxq\lv ^2}{\lv \bxq\lv ^2\lv \bx\lv ^m}\Exq-\frac{2}{\lv \bx\lv ^m}\Exq\\
=&\lv \bx\lv ^{-m}(m-p-1-2\Ex)-\frac{\bx}{\lv \bx\lv ^m}G_{\bx}.
\end{align*}
Hence,
\begin{align*}
&G_{\bx}^2\frac{\bx}{\lv \bx\lv ^m}=G_{\bx}\bigg(\lv \bx\lv ^{-m}(m-p-1-2\Ex)-\frac{\bx}{\lv \bx\lv ^m}G_{\bx}\bigg)\\
=&G_{\bx}\lv \bx\lv ^{-m}(m-p-1-2\Ex)-G_{\bx}\frac{\bx}{\lv \bx\lv ^m}G_{\bx}\\
=&\bigg(\frac{-m\bx}{\lv \bx\lv ^{m+2}}+\lv \bx\lv ^{-m}G_{\bx}\bigg)(m-p-1-2\Ex)-\frac{(m-p-1-2\Ex)G_{\bx}}{\lv \bx\lv ^{m}}+\frac{\bx G_{\bx}^2}{\lv \bx\lv ^m}\\
=&-m\frac{\bx}{\lv \bx\lv ^{m+2}}(m-p-1-2\Ex)-2\lv \bx\lv ^{-m}G_{\bx}+\frac{\bx}{\lv \bx\lv ^m}G_{\bx}^2\\
=&-m\frac{\bx}{\lv \bx\lv ^{m+2}}(m-p-1-2\Ex)-2\lv \bx\lv ^{-m}\bigg(\lv \by\lv ^2G_{\by}-2\by\Ey\bigg)\\
&+\frac{\bx}{\lv \bx\lv ^m}\bigg(\lv \by\lv ^2G_{\by}-2\by\Ey\bigg)\bigg(\lv \by\lv ^2G_{\by}-2\by\Ey\bigg)\\
=&-m\frac{\bx}{\lv \bx\lv ^{m+2}}(m-p-1-2\Ex)-2\lv \bx\lv ^{-m}\bigg(\lv \by\lv ^2G_{\by}-2\by\Ey\bigg)\\
&+\frac{\bx}{\lv \bx\lv ^m}\bigg[\lv \by\lv ^2(2\by+\lv \by\lv ^2G_{\by})G_{\by}-2\lv \by\lv ^2(-p-1-2\Ey-\by G_{\by})\Ey-2\by\Ey\lv \by\lv ^2G_{\by}\\
&+4\by^2(\Ey+1)\Ey\bigg]\\
=&-m(m-p-1)\frac{\bx}{\lv \bx\lv ^{m+2}}+2(m-p-1)\frac{\bx}{\lv \bx\lv ^{m+2}}\Ex-2\lv \bx\lv ^{-m-2}G_{\by}+\frac{\bx}{\lv \bx\lv ^{m+4}}G_{\by}^2.
\end{align*}
The identity (b) can be easily proved by induction and (c) can be proved immediately by applying $G_{\by}$ to (b). Now, we prove the identity (d).
\begin{align*}
&G_{\by}^{2k-2}\lv \by\lv ^2=G_{\by}^{2k-3}(2\by+\lv \by\lv ^2G_{\by})=2G_{\by}^{2k-3}\by+G_{\by}^{2k-4}(2\by+\lv \by\lv ^2G_{\by})G_{\by}=\cdots\\
=&2G_{\by}^{2k-3}\by+2G_{\by}^{2k-4}\by G_{\by}+\cdots+2\by G_{\by}^{2k-3}+\lv \by\lv ^2G_{\by}^{2k-2}\\
=&\sum_{j=1}^{2k-2}2G_{\by}^{2k-2-j}\by G_{\by}^{j-1}+\lv \by\lv ^2G_{\by}^{2k-2}\\
=&\sum_{n=1}^{k-1}G_{\by}^{2k-2-2n}\by G_{\by}^{2n-1}+\sum_{n=1}^{k-1}G_{\by}^{2k-1-2n}\by G_{\by}^{2n-2}+\lv \by\lv ^2G_{\by}^{2k-2}\\
=&\sum_{n=1}^{k-1}\bigg(-(2k-2n-2)G_{\by}^{2k-2n-3}+\by G_{\by}^{2k-2n-2}\bigg) G_{\by}^{2n-1}\\
&+\sum_{n=1}^{k-1}\bigg(-(p+2k-2n-1+2\Ey)G_{\by}^{2k-2n-2}-\by G_{\by}^{2k-2n-1}\bigg)G_{\by}^{2n-2}+\lv \by\lv ^2G_{\by}^{2k-2}\\
=&-2(k-1)(p+2k-3+2\Ey)G_{\by}^{2k-4}+\lv \by\lv ^2G_{\by}^{2k-2}.
\end{align*}
The last equation comes from a straightforward calculation as follows.
\begin{align*}
&\Ex=\sum_{i=1}^{p+q}x_i\partial_{x_i}=\sum_{i=1}^{p+q}\frac{y_i}{\lv \by\lv ^2}\sum_{j=1}^{p+q}\frac{\partial y_j}{\partial_{x_i}}\partial_{y_j}=\sum_{i=1}^{p+q}\frac{y_i}{\lv \by\lv ^2}\sum_{j=1}^{p+q}\bigg(\frac{\delta_{ij}}{\lv \bx\lv ^2}-\frac{2x_ix_j}{\lv \bx\lv ^4}\bigg)\partial_{y_j}\\
=&\sum_{i=1}^{p+q}\frac{y_i}{\lv \by\lv ^2}\frac{\partial_{y_j}}{\lv \bx\lv ^2}-2\sum_{i,j=1}^{p+q}\frac{y_i}{\lv \by\lv ^2}y_iy_j\partial_{y_j}=-\Ey.
\end{align*}
\end{proof}
Now, we can prove the identity \eqref{iterated}.
\begin{proof}
We prove \eqref{iterated} by induction. Firstly, it is true for $l=1$, see Theorem \ref{slice}. Next, we assume that it is true for $l=2k-1$, i.e,
\be
\bigg(\frac{\bx}{\lv \bx\lv ^{p+1+2k}}\bigg)^{-1}G_{\bx}^{2k-1}\frac{\bx}{\lv \bx\lv ^{p+3-2k}}=G_{\by}^{2k-1}.
\ee
We need to show that it is also true for $l=2k+1$ as follows.
\begin{align*}
&\bigg(\frac{\bx}{\lv \bx\lv ^{p+3+2k}}\bigg)^{-1}G_{\bx}^{2k+1}\frac{\bx}{\lv \bx\lv ^{p+1-2k}}=\bigg(\frac{\bx}{\lv \bx\lv ^{p+3+2k}}\bigg)^{-1}G_{\bx}^{2k-1}G_{\bx}^2\frac{\bx}{\lv \bx\lv ^{p+1-2k}}\\
=&\bigg(\frac{\bx}{\lv \bx\lv ^{p+3+2k}}\bigg)^{-1}G_{\bx}^{2k-1}\bigg(2k(p+1-2k)\frac{\bx}{\lv \bx\lv ^{p+3-2k}}-4k\frac{\bx}{\lv \bx\lv ^{p+3-2k}}\Ex\\
&-2\lv \bx\lv ^{2k-p-3}G_{\by}+\frac{\bx}{\lv \bx\lv ^{p+5-2k}}G_{\by}^2\bigg).
\end{align*}
The last equation comes from the identity (a) of Lemma \ref{lemma1}. Now, with the assumption for $l=2k-1$, the equation above becomes
\begin{align*}
&\bigg(\frac{\bx}{\lv \bx\lv ^{p+3+2k}}\bigg)^{-1}\bigg[2k(p+1-2k)\frac{\bx}{\lv \bx\lv ^{p+1+2k}}G_{\by}^{2k-1}+4k\frac{\bx}{\lv \bx\lv ^{p+1+2k}}G_{\by}^{2k-1}\Ey\\
&-2G_{\bx}^{2k-1}\frac{\bx}{\lv \bx\lv ^{p+3-2k}}\bigg(\frac{-\bx}{\lv \bx\lv ^2}\bigg)G_{\by}+G_{\bx}^{2k-1}\frac{\bx}{\lv \bx\lv ^{p+3-2k}}\frac{1}{\lv \bx\lv ^2}G_{\by}^2\bigg]\\
=&2k(p+1-2k)\lv \bx\lv ^2G_{\by}^{2k-1}+4k\lv \bx\lv ^2G_{\by}^{2k-1}\Ey+2\lv \bx\lv ^2G_{\by}^{2k-1}\by G_{\by}+\lv \bx\lv ^2G_{\by}^{2k-1}\lv \by\lv ^2G_{\by}^2\\
=&2k(p+1-2k)\lv \bx\lv ^2G_{\by}^{2k-1}+4k\lv \bx\lv ^2G_{\by}^{2k-1}\Ey+2\lv \bx\lv ^2G_{\by}^{2k-2}G_{\by}\by G_{\by}\\
&+\lv \bx\lv ^2G_{\by}^{2k-2}G_{\by}^2\lv \by\lv ^2G_{\by}^2\\
=&2k(p+1-2k)\lv \bx\lv ^2G_{\by}^{2k-1}+4k\lv \bx\lv ^2G_{\by}^{2k-1}\Ey\\
&+2\lv \bx\lv ^2G_{\by}^{2k-2}\bigg(-(p+1+2\Ey)-\by G_{\by}\bigg)G_{\by}
+\lv \bx\lv ^2G_{\by}^{2k-2}(2\by+\lv \by\lv ^2G_{\by})G_{\by}^2\\
=&2k(p+1-2k)\lv \bx\lv ^2G_{\by}^{2k-1}+4k\lv \bx\lv ^2G_{\by}^{2k-1}\Ey-2\lv \bx\lv ^2G_{\by}^{2k-2}(p+1+2\Ey)G_{\by}\\
&+\lv \bx\lv ^2G_{\by}^{2k-2}\lv \by\lv ^2G_{\by}^3.
\end{align*}
Now, we apply the identity (d) in Lemma \ref{lemma1} to the last term above to obtain
\begin{align*}
=&2k(p+1-2k)\lv \bx\lv ^2G_{\by}^{2k-1}+4k\lv \bx\lv ^2G_{\by}^{2k-1}\Ey-2\lv \bx\lv ^2G_{\by}^{2k-2}(p+1+2\Ey)G_{\by}\\
&+\lv \bx\lv ^2\bigg(-2(k-1)(p+2k+2\Ey-3)G_{\by}^{2k-1}+\lv \by\lv ^2G_{\by}^{2k+1}\bigg)\\
=&2k(p+1-2k)\lv \bx\lv ^2G_{\by}^{2k-1}+4k\lv \bx\lv ^2(\Ey+2k-1)G_{\by}^{2k-1}\\
&-2\lv \bx\lv ^2(p+2\Ey+4k-3)G_{\by}^{2k-1}
-2(k-1)\lv \bx\lv ^2(p+2k+2\Ey-3)G_{\by}^{2k-1}+G_{\by}^{2k+1}\\
=&G_{\by}^{2k+1}.
\end{align*}
Hence, the identity (4) is also true for $l=2k+1$, which completes the proof of Theorem \ref{iterated}.
\end{proof}
%%%%%%%%%%%%%%%
\begin{remark}
We can easily see that the following intertwining operators for the $l$ odd cases
\begin{align*}
\bigg(\frac{c\bx+d}{\lv c\bx+d\lv ^{p+2+l}}\bigg)^{-1},\ \frac{\widetilde{c\bx+d}}{\lv c\bx+d\lv ^{p+2-l}}
\end{align*}
are two vectors. In contrast, for the $l$ even cases, we can adapt the strategy used above to see that $GRAV(\Real^{p+q})$ is also the symmetric group but with scalar intertwining operators. More specifically,
\end{remark}
\begin{theorem}
Let $\by=\varphi(\bx)=(a\bx+b)(c\bx+d)^{-1}\in GRAV(\Real^{p+q})$, and let $f$ be a sufficiently smooth function over a domain $U\subset\Real^{p+q}\backslash\Real^p$. Then, we have
\be
G_{\by}^lf(\by)=\lv c\bx+d\lv ^{p+1+l}G^l_{\bx}\lv c\bx+d\lv ^{-p-1+l}f((a\bx+b)(c\bx+d)^{-1}),\quad l\ even.
\ee
\end{theorem}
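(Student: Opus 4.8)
Following the pattern of the odd case, the plan is to reduce, via the Iwasawa decomposition, to the four generating transformations of $GRAV(\Real^{p+q})$. For a translation $\by=\bx+\bs b$ one has $G_{\by}=G_{\bx}$ and $c\bx+d=1$, so the identity is trivial; for a dilation $\by=\lambda^2\bx$ one has $G_{\by}=\lambda^{-2}G_{\bx}$ and $c\bx+d=\lambda^{-1}$, so $|c\bx+d|^{p+1+l}G_{\bx}^{l}|c\bx+d|^{-p-1+l}=|\lambda|^{-2l}G_{\bx}^{l}=\lambda^{-2l}G_{\bx}^{l}=G_{\by}^{l}$. Thus the content is concentrated in the reflection and the inversion.

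The structural fact that accounts for the now-scalar weights is that $G_{\bx}^{2}$ is a scalar-valued (i.e.\ $\Clpq^{0}$-valued) operator: writing $G_{\bx}=\Dxp+\frac{\bxq}{|\bxq|^{2}}\Exq$, the two mixed terms in $G_{\bx}^{2}$ are $\sum_{i=1}^{p}\bigl(e_i\frac{\bxq}{|\bxq|^{2}}+\frac{\bxq}{|\bxq|^{2}}e_i\bigr)\Exq\partial_{x_i}$ and they vanish because $e_i\bxq+\bxq e_i=0$ for $1\le i\le p$, leaving $G_{\bx}^{2}=-\Delta_{\bxp}+|\bxq|^{-2}\Exq(1-\Exq)$. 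Hence $G_{\bx}^{2k}=(G_{\bx}^{2})^{k}$ is scalar and commutes with every Clifford number. For a reflection $\by=\bs a\bx\bs a^{-1}$, $\bs a\in\Sq$, we already know $G_{\by}=\bs aG_{\bx}\bs a^{-1}$, whence $G_{\by}^{l}=\bs aG_{\bx}^{l}\bs a^{-1}=G_{\bx}^{l}$ by scalarity, while $|c\bx+d|=|\bs a^{-1}|=1$; this is precisely the asserted identity.

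For the inversion $\by=-\bx^{-1}$ (so $c\bx+d=-\bx$, $|c\bx+d|=|\bx|$) I would split off one factor, $G_{\by}^{2k}=G_{\by}\circ G_{\by}^{2k-1}$, and substitute the $l=1$ formula of Theorem~\ref{slice} together with the $l$ odd case established above (taken with $l=2k-1$). Since $\widetilde{-\bx}=-\bx$ and the two weight vectors meeting in the middle multiply, via $-\bx^{2}=|\bx|^{2}$, to the pure power $|\bx|^{2k}$, one is left with the operator identity
\begin{align*}
G_{\by}^{2k}f(\by)=-\bx|\bx|^{p+1}\,G_{\bx}\,|\bx|^{2k}\,G_{\bx}^{2k-1}\,\bx|\bx|^{2k-p-3}\,f(\varphi(\bx)),
\end{align*}
where each function preceding an operator acts by multiplication. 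It remains to show the right side equals $|\bx|^{p+1+2k}G_{\bx}^{2k}|\bx|^{-p-1+2k}f(\varphi(\bx))$; I would do this by (i) pushing $|\bx|^{2k}$ through the first $G_{\bx}$ via $G_{\bx}|\bx|^{s}=s\bx|\bx|^{s-2}+|\bx|^{s}G_{\bx}$ (which appears, in the form $s=-m$, in the proof of Lemma~\ref{lemma1}); (ii) moving the single leftover vector $\bx$ leftward through $G_{\bx}^{2k-1}$ and $G_{\bx}^{2k}$ by the $\bx$-versions of Lemma~\ref{lemma1}(b),(c)---purely formal commutation identities for $G$ applied to any variable, valid by the same proofs---and recombining $\bx$-factors via $-\bx^{2}=|\bx|^{2}$, after which several terms cancel in pairs; (iii) noting that the only residual discrepancy is exactly the $\bx$-version of Lemma~\ref{lemma1}(d), $G_{\bx}^{2k}|\bx|^{2}=|\bx|^{2}G_{\bx}^{2k}-2k(p+2k-1+2\Ex)G_{\bx}^{2k-2}$, which makes everything collapse to $G_{\bx}^{2k}$.

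The main obstacle is the bookkeeping in this last computation: carrying the various scalar powers of $|\bx|$ and the leftover vector $\bx$ through the iterated operator and checking that the error terms produced by Lemma~\ref{lemma1}(c) and by the non-commutation of multiplication by $|\bx|^{2}$ with $G_{\bx}^{2k}$ cancel exactly. No new idea is required---it is the direct analogue of the odd-case induction---but the intermediate expressions are long; it is worth recording at the outset that parts (b)--(d) of Lemma~\ref{lemma1} are variable-agnostic so that their $\bx$-versions are available for free.
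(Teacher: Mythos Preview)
Your proposal is correct and takes a genuinely different route from the paper's own argument. The paper proves the inversion case by an independent induction on even $l$: it first establishes $l=2$ by direct computation of $G_{\bx}^{2}|\bx|^{1-p}$ (together with the change-of-variable identities $G_{\bx}=|\by|^{2}G_{\by}-2\by\Ey$ and $G_{\bx}^{2}=2(p-1)|\by|^{2}\Ey+|\by|^{4}G_{\by}^{2}$), and then passes from $l=2k$ to $l=2k+2$ by writing $G_{\bx}^{2k+2}=G_{\bx}^{2k}G_{\bx}^{2}$ and invoking Lemma~\ref{lemma1}(d). You instead leverage the already-proved odd theorem as a black box: decomposing $G_{\by}^{2k}=G_{\by}\circ G_{\by}^{2k-1}$ and plugging in the intertwining identities for $l=1$ and $l=2k-1$ leaves a purely $\bx$-side operator identity, which you then settle with the $\bx$-versions of Lemma~\ref{lemma1}(b)--(d). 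This buys you the elimination of a separate base case and of the $\bx\leftrightarrow\by$ conversion formulas, at the modest cost of noting that parts (b)--(d) of the lemma are variable-agnostic (which they are: their proofs never use that $\by=-\bx^{-1}$). I checked your outlined manipulation and it closes exactly as you say: after pushing $|\bx|^{2k}$ through one $G_{\bx}$ and applying the $\bx$-forms of (b) and (c), the two residual terms containing a lone $\bx$ cancel, and the remaining discrepancy is precisely the $\bx$-form of (d) with $k$ in place of $k-1$. Your observation that $G_{\bx}^{2}=-\Delta_{\bxp}+|\bxq|^{-2}\Exq(1-\Exq)$ is scalar-valued is also a genuine addition: the paper glosses over the reflection case with ``same argument as in the odd case'', but in fact matching $\bs aG_{\bx}^{l}\bs a^{-1}$ with the scalar weights $|c\bx+d|^{\pm(p+1\pm l)}=1$ does require exactly this commutation, so you are being more careful there.
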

\begin{proof}
The proof is similar to the odd case. The same argument as in the odd case shows us the invariance with respect to translation, dilation and rotation. Here, we only show the invariance under inversion. More specifically, we show that
\begin{align}\label{even}
G_{\by}^l=\lv \bx\lv ^{p+1+l}G^l_{\bx}\lv \bx\lv ^{-p-1+l},\ l\ even,
\end{align}
where $\by=-\bx^{-1}$. We also prove this by induction. When $l=2$, using the identities obtained in the proof of Lemma \ref{lemma1} $(a)$, we have
\begin{align*}
&G_{\bx}^2\lv \bx\lv ^{1-p}
%=\bigg(\sum_{i=1}^pe_i\partial_{x_i}+\frac{\bx_q}{\lv \bx_q\lv ^2}\sum_{i=p+1}^{p+q}x_i\partial_{x_i}\bigg)\lv \bx\lv ^{1-p}\\
%=&\sum_{i=1}^pe_i\bigg((1-p)x_i\lv \bx\lv ^{-p-1}+\lv \bx\lv ^{1-p}\partial_{x_i}\bigg)+\frac{\bx_q}{\lv \bx_q\lv ^2}\sum_{i=p+1}^{p+q}x_i\bigg((1-p)x_i\lv \bx\lv ^{-p-1}+\lv \bx\lv ^{1-p}\partial_{x_i}\bigg)\\
%=&(1-p)\bxp\lv \bx\lv ^{-p-1}+\lv \bx\lv ^{1-p}\Dxp+\frac{\bx_q}{\lv \bx_q\lv ^2}\bigg((1-p)\lv \bxq\lv ^2\lv \bx\lv ^{-p-1}+\lv \bx\lv ^{1-p}\Exq\bigg)\\
=G_{\bx}\bigg((1-p)\frac{\bx}{\lv \bx\lv ^{p+1}}+\lv \bx\lv ^{1-p}G_{\bx}\bigg)\\
=&(1-p)\bigg(-2\lv \bx\lv ^{-1-p}\Ex-\frac{\bx}{\lv \bx\lv ^{p+1}}G_{\bx}\bigg)+\bigg((1-p)\frac{\bx}{\lv \bx\lv ^{p+1}}+\lv \bx\lv ^{1-p}G_{\bx}\bigg)G_{\bx}\\
=&2(p-1)\lv \bx\lv ^{-1-p}\Ex+\lv \bx\lv ^{1-p}G_{\bx}^2.
\end{align*}
Further, we  have
\begin{align*}
G_{\bx}=\sum_{i=1}^pe_i\bigg(\frac{1}{\lv \bx\lv ^2}\partial_{y_i}-2y_i\Ey\bigg)+\frac{\byq}{\lv \byq\lv ^2}\lv \by\lv ^2\bigg(\Eyq-2\frac{\lv \byq\lv ^2}{\lv \by\lv ^2}\Ey\bigg)
=\lv \by\lv ^2G_{\by}-2\by\Ey,
\end{align*}
and
\begin{align*}
&G_{\bx}^2=(\lv \by\lv ^2G_{\by}-2\by\Ey)(\lv \by\lv ^2G_{\by}-2\by\Ey)\\
=&\lv \by\lv ^2(2\by+\lv \by\lv ^2G_{\by})G_{\by}-2\lv \by\lv ^2(-(p+1+2\Ey)-\by G_{\by})\Ey-2\by\Ey\lv \by\lv ^2G_{\by}\\
&-4\lv \by\lv ^2(\Ey+1)\Ey\\
=&2(p-1)\lv \by\lv ^2\Ey+\lv \by\lv ^4G_{\by}^2.
\end{align*}
Hence, when $l=2$, we have
\begin{align*}
&\lv \bx\lv ^{p+3}G_{\bx}^2\lv \bx\lv ^{1-p}=\lv \by\lv ^{-p-3}\bigg(2(p-1)\lv \bx\lv ^{-1-p}\Ex+\lv \bx\lv ^{1-p}G_{\bx}^2\bigg)\\
=&\lv \by\lv ^{-p-3}\bigg(2(1-p)\lv \by\lv ^{1+p}\Ey+\lv \by\lv ^{p-1}(2(p-1)\lv \by\lv ^2\Ey+\lv \by\lv ^4G_{\by}^2)\bigg)
=G_{\by}^2.
\end{align*}
Next, we assume that \eqref{even} is true for $l=2k$, i.e.,
\begin{align}
G_{\by}^{2k}=\lv \bx\lv ^{p+1+2k}G^{2k}_{\bx}\lv \bx\lv ^{-p-1+2k}.
\end{align}
For $l=2k+2$, we have
\begin{align*}
&\lv \bx\lv ^{p+2k+3}G^{2k+2}_{\bx}\lv \bx\lv ^{-p+2k+1}=\lv \bx\lv ^{p+2k+3}G_{\bx}^{2k}G_{\bx}^2\lv \bx\lv ^{-p-1+2k+2}\\
=&\lv \bx\lv ^{p+2k+3}\bigg[G_{\bx}^{2k}\bigg((-p+2k+1)\lv \bx\lv ^{-p+2k-1}(-2k-2\Ex)+\lv \bx\lv ^{-p+2k+1}G_{\bx}^2 \bigg)\bigg]\\
=&\lv \bx\lv ^{p+2k+3}\bigg[(-p+2k+1)G_{\bx}^{2k}\lv \bx\lv ^{-p+2k-1}(-2k-2\Ex)+G_{\bx}^{2k}\lv \bx\lv ^{-p+2k-1}\lv \bx\lv ^2G_{\bx}^2\bigg].
\end{align*}
Now, we apply our assumption for $l=2k$ to obtain
\begin{align*}
=&\lv \bx\lv ^{p+2k+3}\bigg[(-p+2k+1)\lv \bx\lv ^{-p-1-2k}G_{\by}^{2k}(-2k-2\Ex)+\lv \bx\lv ^{-p-1-2k}G_{\by}^{2k}\lv \bx\lv ^2G_{\bx}^2\bigg]\\
=&(-p+2k+1)\lv \bx\lv ^2G_{\by}^{2k}(-2k+2\Ey)+\lv \bx\lv ^2G_{\by}^{2k}\lv \by\lv ^{-2}\bigg(2(p-1)\lv \by\lv ^2\Ey+\lv \by\lv ^4G_{\by}^2\bigg)\\
=&(-p+2k+1)\lv \bx\lv ^2G_{\by}^{2k}(-2k+2\Ey)+2(p-1)\lv \bx\lv ^2G_{\by}^{2k}\Ey+\lv \bx\lv ^2G_{\by}^{2k}\lv \by\lv ^2G_{\by}^2\\
=&2k(p-2k-1)\lv \bx\lv ^2G_{\by}^{2k}+4k\lv \bx\lv ^2G_{\by}^{2k}\Ey+\lv \bx\lv ^2G_{\by}^{2k}\lv \by\lv ^2G_{\by}^2.
\end{align*}
Using Lemma \ref{lemma1} $(d)$, we have
\begin{align*}
=&2k(p-2k-1)\lv \bx\lv ^2G_{\by}^{2k}+4k\lv \bx\lv ^2G_{\by}^{2k}\Ey\\
&+\lv \bx\lv ^2\bigg(-2k(p+2k-1+2E_{\by})G_{\by}^{2k-2}+\lv \by\lv ^2G_{\by}^{2k}\bigg)G_{\by}^2\\
=&-8k^2\lv \bx\lv ^2G_{\by}^{2k}+4k\lv \bx\lv ^2G_{\by}^{2k}\Ey-4k\lv \bx\lv ^2G_{\by}^{2k}(\Ey-2k)+G_{\by}^{2k+2}\\
=&G_{\by}^{2k+2},
\end{align*}
which proves that the equation \eqref{even} is also true for $l=2k+2$, and this completes the proof.
\end{proof}
%%%%%%%%%%%%%%%%%%%%%%
\section{A variant of the slice Dirac operator}
Recall that the slice Dirac operator is defined as
\be
G_{\bx}=\Dxp+\frac{\bx_q}{\lv \bx_q\lv ^2}\Exq=\sum_{i=1}^pe_i\partial_{x_i}+\frac{\bx_q}{\lv \bx_q\lv ^2}\sum_{i=p+1}^{p+q}x_i\partial_{x_i}.
\ee
Apparently, this operator is not well-defined on the whole Euclidean space $\Rpq$ since it requires that $\lv \bxq\lv \neq 0$. In other words, the domain of the functions considered can not intersect $\Real^p$. Hence, it is reasonable to consider a variant of the slice Dirac operator given as $G^{\dagger}_{\bx}=\lv \bx_q\lv ^2\Dxp+\bx_q\Exq.$ This operator also has an invariance property  with respect to the group $GRAV(\Rpq)$. More specifically, we have
\begin{theorem}
Let $\by=\varphi(\bx)=(a\bx+b)(c\bx+d)^{-1}\in GRAV(\Real^{p+q})$, and let $f\in C^1(U)$, where $U$ is a domain in $\Real^{p+q}$. Then, we have
\begin{eqnarray}\label{inter1}
G^{\dagger}_{\by}f(\by)=\bigg(\frac{c\bx+d}{\lv c\bx+d\lv ^{p-1}}\bigg)^{-1}G^{\dagger}_{\bx}\frac{\widetilde{c\bx+d}}{\lv c\bx+d\lv ^{p+1}}f((a\bx+b)(c\bx+d)^{-1}).
\end{eqnarray}
\end{theorem}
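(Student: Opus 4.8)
The plan is to deduce the statement from Theorem \ref{slice} by the elementary remark that the variant operator factors through the original one. For any differentiable $\Clpq$-valued function $g$ defined on a subset of $\Rpq\backslash\Real^p$ one has the operator identity
\[
G^{\dagger}_{\bx}g=\lv\bxq\rv^2\bigg(\Dxp g+\frac{\bxq}{\lv\bxq\rv^2}\Exq g\bigg)=\lv\bxq\rv^2\,G_{\bx}g,
\]
where the real scalar $\lv\bxq\rv^2$ multiplies from the left after $G_{\bx}$ has been applied. Hence it should suffice to multiply the intertwining identity of Theorem \ref{slice} on the left by $\lv\byq\rv^2$ and to reorganise the scalar weight factors.

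Concretely, I would first work on $U\backslash\Real^p$, which is open and dense in $U$ since $q\ge 1$, so that Theorem \ref{slice} is available there. Multiplying that identity on the left by $\lv\byq\rv^2$ and using that the real scalars $\lv\bxq\rv^{\pm2}$ commute with every Clifford-valued factor, I insert $\lv\bxq\rv^{-2}\lv\bxq\rv^2$ and move $\lv\bxq\rv^2$ next to $G_{\bx}$, so that $\lv\bxq\rv^2G_{\bx}$ becomes $G^{\dagger}_{\bx}$:
\[
G^{\dagger}_{\by}f(\by)=\frac{\lv\byq\rv^2}{\lv\bxq\rv^2}\bigg(\frac{c\bx+d}{\lv c\bx+d\rv^{p+3}}\bigg)^{-1}G^{\dagger}_{\bx}\,\frac{\widetilde{c\bx+d}}{\lv c\bx+d\rv^{p+1}}f((a\bx+b)(c\bx+d)^{-1}).
\]

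The crux is then the scalar identity $\lv\byq\rv^2/\lv\bxq\rv^2=\lv c\bx+d\rv^{-4}$ for $\by=\varphi(\bx)\in GRAV(\Rpq)$, which I would check on the four generators: for $\by=\bx+\bs b$ with $\bs b\in\Real^p$ both sides are $1$; for $\by=\lambda^2\bx$ the left-hand side is $\lambda^4$ and $\lv c\bx+d\rv=\lv\lambda\rv^{-1}$; for $\by=\bs a\bx\bs a^{-1}$ with $\bs a\in\Sq$ one has $\byq=\bs a\bxq\bs a^{-1}$, so the left-hand side is $1$ and $\lv c\bx+d\rv=\lv\bs a^{-1}\rv=1$; and for the inversion $\by=-\bx^{-1}=\bx/\lv\bx\rv^2$ one has $\byq=\bxq/\lv\bx\rv^2$, so the left-hand side is $\lv\bx\rv^{-4}$ and $c\bx+d=-\bx$. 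The identity then propagates to arbitrary composites: $\bx\mapsto\lv c\bx+d\rv$ satisfies the Vahlen cocycle rule (each $c\bx+d$ lies in the Clifford group, on which the norm is multiplicative), while $\bx\mapsto\lv\varphi(\bx)_q\rv/\lv\bxq\rv$ multiplies along compositions by the chain rule, so both sides are $1$-cocycles agreeing on the generating set. Substituting it and simplifying
\[
\frac{1}{\lv c\bx+d\rv^{4}}\bigg(\frac{c\bx+d}{\lv c\bx+d\rv^{p+3}}\bigg)^{-1}=\lv c\bx+d\rv^{p-1}(c\bx+d)^{-1}=\bigg(\frac{c\bx+d}{\lv c\bx+d\rv^{p-1}}\bigg)^{-1}
\]
yields \eqref{inter1} for every $\bx$ with $\varphi(\bx)\notin\Real^p$.

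To conclude, I would pass to the full domain by continuity: $G^{\dagger}_{\bx}$ has polynomial coefficients and the two weight functions are smooth wherever $c\bx+d\neq0$, so both sides of \eqref{inter1}, viewed as functions of $\by$ via $\bx=\varphi^{-1}(\by)$, are continuous on $U$; since they agree on the dense subset $U\backslash\Real^p$, they agree on all of $U$, i.e.\ for an arbitrary domain $U\subset\Rpq$ and $f\in C^1(U)$. I expect the only genuine obstacle to be the conformal-factor identity $\lv\byq\rv^2/\lv\bxq\rv^2=\lv c\bx+d\rv^{-4}$, together with the accompanying check that $GRAV(\Rpq)$ really respects the splitting $\Real^p\oplus\Real^q$ (each generator sends $\Real^p$ into $\Real^p\cup\{\infty\}$), so that $\byq$ is meaningful and the cocycle argument closes; all remaining manipulations are the same scalar-weight bookkeeping already performed for $G_{\bx}$ in the proof of Theorem \ref{slice}.
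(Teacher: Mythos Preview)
Your proposal is correct and follows essentially the same route as the paper: both arguments rest on the factorisation $G^{\dagger}_{\bx}=\lv\bxq\rv^2 G_{\bx}$ and then feed the four generators of $GRAV(\Rpq)$ through Theorem \ref{slice}. The paper simply re-checks \eqref{inter1} on each generator using the corresponding case of Theorem \ref{slice} together with the behaviour of $\lv\bxq\rv^2$, whereas you isolate the new content as the scalar cocycle identity $\lv\byq\rv^2/\lv\bxq\rv^2=\lv c\bx+d\rv^{-4}$, verify it on the generators, and then combine it once with the full statement of Theorem \ref{slice}; this is a mild repackaging rather than a different idea. One point where your write-up is more careful than the paper's is the continuity extension from $U\backslash\Real^p$ to all of $U$, which the paper does not address explicitly.
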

\begin{proof}
The strategy is similar to that used in Theorem \ref{slice}. We only need to show \eqref{inter1} is true for the four basic transformations respectively. Since $G^{\dagger}_{\bx}=\lv \bxq\lv ^2G_{\bx}$, we can applied the result obtained in Theorem \ref{slice} here. More specifically,
\begin{enumerate}
\item $\by=\varphi(\bx)=\bx+b,\ b\in\Real^{p}$, in this case, $a=1,\ c=0,\ d=1$.
\par
Since $\lv \byq\lv ^2=\lv \bxq\lv ^2$ under this type of translations and $G_{\by}=G_{\bx}$, we have
\be
G^{\dagger}_{\by}=\lv \by_q\lv ^2G_{\by}=\lv \bx_q\lv ^2G_{\bx}=G^{\dagger}_{\bx}.
\ee
%%%%
\item $\by=\varphi(\bx)=a\bx a^{-1},\ a\in\Sq$, which gives us that $\byp=-\bxp$ and $\byq=a\bxq a^{-1}$. In this case, $b=c=0,\ d=a^{-1}$. Since $\lv \byq\lv ^2=\lv \bxq\lv ^2$ under this type of reflection and $G_{\by}=aG_{\bx}a^{-1}$,
 we have
\begin{align*}
G^{\dagger}_{\by}&=\lv \by_q\lv ^2G_{\by}=\lv \bxq\lv ^2aG_{\bx}a^{-1}=aG^{\dagger}_{\bx}a^{-1}.
\end{align*}
To see the equation above is in the form of \eqref{inter}, we only need to see that $\widetilde{a^{-1}}=a$ and $\lv a \lv =1$ for $a\in\Sq$.
%%%%
\item $\by=\varphi(\bx)=\lambda^2\bx,\ \lambda\in\Real\backslash\{0\}$, in this case, $a=\lambda=d^{-1},\ b=c=0$.
\par
It is also easy to see that $\partial_{y_i}=\lambda^{-2}\partial_{x_i}$, and these give us
\be
G^{\dagger}_{\by}=\lv \by_q\lv ^2\sum_{i=1}^pe_i\partial_{y_i}+\by_q\sum_{i=p+1}^{p+q}y_i\partial_{y_i}=\lambda^{2}G^{\dagger}_{\bx}.
\ee
%%%%
\item $\by=\varphi(\bx)=-\bx^{-1}=\displaystyle\frac{\bx}{\lv \bx\lv ^2}$, in this case, $a=d=0,\ b=-c=1$. In other words, we need to show that
\be
\bigg(\frac{\bx}{\lv \bx\lv ^{p-1}}\bigg)^{-1}G^{\dagger}_{\bx}\frac{\bx}{\lv \bx\lv ^{p+1}}=G^{\dagger}_{\by}.
\ee
First, we notice that $G^{\dagger}_{\bx}=\lv \bxq\lv ^2G_{\bx}$, hence with the results in Theorem \ref{slice}, we have that
\begin{align*}
&G^{\dagger}_{\by}=\lv \byq\lv ^2G_{\by}=\lv \byq\lv ^2\bigg(\frac{\bx}{\lv \bx\lv ^{p+3}}\bigg)^{-1}G_{\bx}\frac{\bx}{\lv \bx\lv ^{p+1}}=\frac{\lv \bxq\lv ^2}{\lv \bx\lv ^4}\bigg(\frac{\bx}{\lv \bx\lv ^{p+3}}\bigg)^{-1}G_{\bx}\frac{\bx}{\lv \bx\lv ^{p+1}}\\
&=\bigg(\frac{\bx}{\lv \bx\lv ^{p-1}}\bigg)^{-1}G^{\dagger}_{\bx}\frac{\bx}{\lv \bx\lv ^{p+1}},
\end{align*}
which completes the proof.
\end{enumerate}
\end{proof}
\begin{remark}
From the proof above, we can see the reason that $G_{\bx}^{\dagger}$ is invariant under the transformations in $GRAV(\Rpq)$ is the following: since $G_{\bx}^{\dagger}=\lv \bxq\lv ^2G_{\bx}$ and $G_{\bx}$ is invariant under the transformations in $GRAV(\Rpq)$, hence we only need to modify the left weight function of $G_{\bx}$ given in Theorem \ref{slice} regarding the term $\lv \bxq\lv ^2$. However, this also indicates that $(G_{\bx}^{\dagger})^l,\ l>1$ does not preserve this invariant property anymore because of the interaction between $G_{\bx}$ and $\lv \bxq\lv ^2$ when we consider the iterated cases.
\end{remark}

%%%%%%%%%%%%%%%%%%%%%%%%%%%%%%%%%
\bmhead{Acknowledgments}

The authors are grateful to the referee for helpful comments. Chao Ding is supported by the National Natural Science Foundation (NNSF) of China (No. 12271001) and the Natural Science Foundation of Anhui Province (No. 2308085MA03). Zhenghua Xu is supported by the National Natural Science Foundation (NNSF) of China (No. 11801125) and the Natural Science Foundation of Anhui Province (No.
2308085MA04).
\section*{Declarations}
No potential conflict of interest was reported by the authors.

\bibliography{sn-bibliography}% common bib file

%% BioMed_Central_Bib_Style_v1.01

\begin{thebibliography}{18}
% BibTex style file: bmc-mathphys.bst (version 2.1), 2014-07-24
\ifx \bisbn   \undefined \def \bisbn  #1{ISBN #1}\fi
\ifx \binits  \undefined \def \binits#1{#1}\fi
\ifx \bauthor  \undefined \def \bauthor#1{#1}\fi
\ifx \batitle  \undefined \def \batitle#1{#1}\fi
\ifx \bjtitle  \undefined \def \bjtitle#1{#1}\fi
\ifx \bvolume  \undefined \def \bvolume#1{\textbf{#1}}\fi
\ifx \byear  \undefined \def \byear#1{#1}\fi
\ifx \bissue  \undefined \def \bissue#1{#1}\fi
\ifx \bfpage  \undefined \def \bfpage#1{#1}\fi
\ifx \blpage  \undefined \def \blpage #1{#1}\fi
\ifx \burl  \undefined \def \burl#1{\textsf{#1}}\fi
\ifx \doiurl  \undefined \def \doiurl#1{\url{https://doi.org/#1}}\fi
\ifx \betal  \undefined \def \betal{\textit{et al.}}\fi
\ifx \binstitute  \undefined \def \binstitute#1{#1}\fi
\ifx \binstitutionaled  \undefined \def \binstitutionaled#1{#1}\fi
\ifx \bctitle  \undefined \def \bctitle#1{#1}\fi
\ifx \beditor  \undefined \def \beditor#1{#1}\fi
\ifx \bpublisher  \undefined \def \bpublisher#1{#1}\fi
\ifx \bbtitle  \undefined \def \bbtitle#1{#1}\fi
\ifx \bedition  \undefined \def \bedition#1{#1}\fi
\ifx \bseriesno  \undefined \def \bseriesno#1{#1}\fi
\ifx \blocation  \undefined \def \blocation#1{#1}\fi
\ifx \bsertitle  \undefined \def \bsertitle#1{#1}\fi
\ifx \bsnm \undefined \def \bsnm#1{#1}\fi
\ifx \bsuffix \undefined \def \bsuffix#1{#1}\fi
\ifx \bparticle \undefined \def \bparticle#1{#1}\fi
\ifx \barticle \undefined \def \barticle#1{#1}\fi
\bibcommenthead
\ifx \bconfdate \undefined \def \bconfdate #1{#1}\fi
\ifx \botherref \undefined \def \botherref #1{#1}\fi
\ifx \url \undefined \def \url#1{\textsf{#1}}\fi
\ifx \bchapter \undefined \def \bchapter#1{#1}\fi
\ifx \bbook \undefined \def \bbook#1{#1}\fi
\ifx \bcomment \undefined \def \bcomment#1{#1}\fi
\ifx \oauthor \undefined \def \oauthor#1{#1}\fi
\ifx \citeauthoryear \undefined \def \citeauthoryear#1{#1}\fi
\ifx \endbibitem  \undefined \def \endbibitem {}\fi
\ifx \bconflocation  \undefined \def \bconflocation#1{#1}\fi
\ifx \arxivurl  \undefined \def \arxivurl#1{\textsf{#1}}\fi
\csname PreBibitemsHook\endcsname

%%% 1
\bibitem{Brackx}
\begin{bbook}
\bauthor{\bsnm{Brackx}, \binits{F.}},
\bauthor{\bsnm{Delanghe}, \binits{R.}},
\bauthor{\bsnm{Sommen}, \binits{F.}}:
\bbtitle{Clifford Analysis}.
\bpublisher{Pitman},
\blocation{Boston}
(\byear{1982})
\end{bbook}
\endbibitem

%%% 2
\bibitem{Del}
\begin{bbook}
\bauthor{\bsnm{Delanghe}, \binits{R.}},
\bauthor{\bsnm{Sommen}, \binits{F.}},
\bauthor{\bsnm{Sou{\v c}ek}, \binits{V.}}:
\bbtitle{Clifford Algebras and Spinor-valued Functions. A Function Theory for
  the Dirac Operator}.
\bpublisher{Kluwer Academic Publishers Group},
\blocation{Dordrecht}
(\byear{1992})
\end{bbook}
\endbibitem

%%% 3
\bibitem{Gen2}
\begin{barticle}
\bauthor{\bsnm{Gentili}, \binits{G.}},
\bauthor{\bsnm{Struppa}, \binits{D.C.}}:
\batitle{A new approach to {C}ullen-regular functions of a quaternionic
  variable}.
\bjtitle{C. R. Math. Acad. Sci. Paris.}
\bvolume{342},
\bfpage{741}--\blpage{744}
(\byear{2006})
\end{barticle}
\endbibitem

%%% 4
\bibitem{Gen3}
\begin{barticle}
\bauthor{\bsnm{Gentili}, \binits{G.}},
\bauthor{\bsnm{Struppa}, \binits{D.C.}}:
\batitle{A new theory of regular function of a quaternionic variable}.
\bjtitle{Adv. Math.}
\bvolume{216},
\bfpage{279}--\blpage{301}
(\byear{2007})
\end{barticle}
\endbibitem

%%% 5
\bibitem{Cullen}
\begin{barticle}
\bauthor{\bsnm{Cullen}, \binits{C.G.}}:
\batitle{An integral theorem for analytic intrinsic functions on quaternions}.
\bjtitle{Duke Math. J.}
\bvolume{32},
\bfpage{139}--\blpage{148}
(\byear{1965})
\end{barticle}
\endbibitem

%%% 6
\bibitem{Co1}
\begin{barticle}
\bauthor{\bsnm{Colombo}, \binits{F.}},
\bauthor{\bsnm{Sabadini}, \binits{I.}},
\bauthor{\bsnm{Struppa}, \binits{D.C.}}:
\batitle{An extension theorem for slice monogenic functions and some of its
  consequences}.
\bjtitle{Israel J. Math.}
\bvolume{177},
\bfpage{369}--\blpage{489}
(\byear{2010})
\end{barticle}
\endbibitem

%%% 7
\bibitem{Gen1}
\begin{barticle}
\bauthor{\bsnm{Gentili}, \binits{G.}},
\bauthor{\bsnm{Struppa}, \binits{D.C.}}:
\batitle{Regular functions on the spaces of {C}ayley numbers}.
\bjtitle{Rocky Mountain J. Math.}
\bvolume{40},
\bfpage{225}--\blpage{241}
(\byear{2010})
\end{barticle}
\endbibitem

%%% 8
\bibitem{Ghi1}
\begin{barticle}
\bauthor{\bsnm{Ghiloni}, \binits{R.}},
\bauthor{\bsnm{Perotti}, \binits{A.}}:
\batitle{Slice regular functions on real alternative algebras}.
\bjtitle{Adv. Math.}
\bvolume{226},
\bfpage{1662}--\blpage{1691}
(\byear{2011})
\end{barticle}
\endbibitem

%%% 9
\bibitem{GhiJGA}
\begin{barticle}
\bauthor{\bsnm{Ghiloni}, \binits{R.}}:
\batitle{Slice {F}ueter-regular functions}.
\bjtitle{J. Geom. Anal.}
\bvolume{31},
\bfpage{11988}--\blpage{12033}
(\byear{2021})
\end{barticle}
\endbibitem

%%% 10
\bibitem{RenJin}
\begin{barticle}
\bauthor{\bsnm{Jin}, \binits{M.}},
\bauthor{\bsnm{Ren}, \binits{G.}},
\bauthor{\bsnm{Sabadini}, \binits{I.}}:
\batitle{Slice {D}irac operator over octonions}.
\bjtitle{Israel J. Math.}
\bvolume{240},
\bfpage{315}--\blpage{344}
(\byear{2020})
\end{barticle}
\endbibitem

%%% 11
\bibitem{Bisi}
\begin{barticle}
\bauthor{\bsnm{Bisi}, \binits{C.}},
\bauthor{\bsnm{Winkelmann}, \binits{J.}}:
\batitle{The harmonicity of slice regular functions}.
\bjtitle{J. Geom. Anal.}
\bvolume{31}(\bissue{8}),
\bfpage{7773}--\blpage{7811}
(\byear{2021})
\end{barticle}
\endbibitem

%%% 12
\bibitem{Co}
\begin{barticle}
\bauthor{\bsnm{Colombo}, \binits{F.}},
\bauthor{\bsnm{R.S.}, \binits{K.}},
\bauthor{\bsnm{Sabadini}, \binits{I.}}:
\batitle{Symmetries of slice monogenic functions}.
\bjtitle{J. Noncommut. Geom.}
\bvolume{14}(\bissue{3}),
\bfpage{1075}--\blpage{1106}
(\byear{2020})
\end{barticle}
\endbibitem

%%% 13
\bibitem{XS}
\begin{botherref}
\oauthor{\bsnm{Xu}, \binits{Z.}},
\oauthor{\bsnm{Sabadini}, \binits{I.}}:
Generalized partial-slice monogenic functions.
arXiv: 2309.03698
(2023)
\end{botherref}
\endbibitem

%%% 14
\bibitem{Co3}
\begin{bbook}
\bauthor{\bsnm{Colombo}, \binits{F.}},
\bauthor{\bsnm{Sabadini}, \binits{I.}},
\bauthor{\bsnm{Struppa}, \binits{D.C.}}:
\bbtitle{Noncommutative Functional Analysis: Theory and Applications of Slice
  Hyperholomorphic Functions}.
\bpublisher{Birkh{\"a}user},
\blocation{Basel}
(\byear{2011})
\end{bbook}
\endbibitem

%%% 15
\bibitem{CoG}
\begin{barticle}
\bauthor{\bsnm{Colombo}, \binits{F.}},
\bauthor{\bsnm{Gonz{\'a}lez-Cervantes}, \binits{J.O.}},
\bauthor{\bsnm{Sabadini}, \binits{I.}}:
\batitle{A nonconstant coefficients differential operator associated to slice
  monogenic functions}.
\bjtitle{Trans. Amer. Math. Soc.}
\bvolume{365}(\bissue{1}),
\bfpage{303}--\blpage{318}
(\byear{2013})
\end{barticle}
\endbibitem

%%% 16
\bibitem{Lou}
\begin{bbook}
\bauthor{\bsnm{Lounesto}, \binits{P.}}:
\bbtitle{Clifford Algebras and Spinors}.
\bpublisher{Cambridge University Press},
\blocation{Cambridge}
(\byear{2001})
\end{bbook}
\endbibitem

%%% 17
\bibitem{Li}
\begin{barticle}
\bauthor{\bsnm{Li}, \binits{J.}},
\bauthor{\bsnm{Ryan}, \binits{J.}}:
\batitle{Some operators associated to {R}arita-{S}chwinger type operators}.
\bjtitle{Complex Var. Elliptic Equ.}
\bvolume{57},
\bfpage{885}--\blpage{902}
(\byear{2012})
\end{barticle}
\endbibitem

%%% 18
\bibitem{Peetre}
\begin{barticle}
\bauthor{\bsnm{Peetre}, \binits{J.}},
\bauthor{\bsnm{Qian}, \binits{T.}}:
\batitle{M{\"o}bius covariance of iterated {D}irac operators}.
\bjtitle{J. Austral. Math. Soc. Series A}
\bvolume{56},
\bfpage{403}--\blpage{414}
(\byear{1994})
\end{barticle}
\endbibitem

\end{thebibliography}

%% if required, the content of .bbl file can be included here once bbl is generated
%%\input sn-article.bbl

%% Default %%
%%\input sn-sample-bib.tex%

\end{document}